\newcommand{\klein}[1]{{\footnotesize{#1}}}
\newcommand{\E}{\mathbf{E}}
\newcommand{\Prob}{\mathbf{Pr}}
\newtheorem{remark}[theorem]{Remark}
\newenvironment{preview}[1][Preview]{\medskip\noindent
\begin{trivlist}
\item{\hskip \labelsep{\scshape Preview of #1\ }}
\it\ignorespaces}{\end{trivlist}}
\title{The effect of coherence on sampling from matrices with orthonormal 
columns, and preconditioned least squares problems\thanks{Both authors were
supported in part by NSF grant CCF-1145383. The first author also
acknowledges the support from the XDATA Program of the Defense Advanced
Research Projects Agency (DARPA), administered through Air Force
Research Laboratory contract FA8750-12-C-0323 FA8750-12-C-0323.
}
}
\author{Ilse C. F. Ipsen\thanks{%
Department of Mathematics, North Carolina State University, P.O. Box 8205,
Raleigh, NC 27695-8205, USA, (\texttt{ipsen@ncsu.edu}, 
\texttt{http://www4.ncsu.edu/{\char'176}ipsen/})}
\and
Thomas Wentworth\thanks{%
Department of Mathematics, North Carolina State University, P.O. Box 8205,
Raleigh, NC 27695-8205, USA (\texttt{thomas\_wentworth@ncsu.edu})}
}
\begin{document}
\maketitle

\begin{abstract} 
Motivated by the least squares solver \textsl{Blendenpik}, we
investigate three strategies for uniform sampling of rows from
$m\times n$ matrices $Q$ with orthonormal columns. The goal is to
determine, with high probability, how many rows are required so that
the sampled matrices have full rank and are well-conditioned with
respect to inversion.

Extensive numerical experiments illustrate that the three sampling
strategies (without replacement, with replacement, and Bernoulli
sampling) behave almost identically, 
for small to moderate amounts of sampling.
In particular, sampled matrices of full rank tend to have
two-norm condition numbers of at most~10.

We derive a bound on the condition number of the sampled matrices
in terms of the coherence $\mu$ of $Q$. This bound
applies to all three different sampling strategies; it
implies a, not necessarily tight, lower bound of
$\mathcal{O}\left(m\mu\ln{n}\right)$ for the number of sampled rows;
and it is realistic and informative 
even for matrices of small dimension
and the stringent requirement of a 99 percent success probability.

For uniform sampling with replacement we derive a potentially tighter
condition number bound in terms of the leverage scores of $Q$. To obtain
a more easily computable version of this bound, in terms of just the largest
leverage scores, we first derive a general bound on the two-norm of 
diagonally scaled matrices.

To facilitate the numerical experiments and test the tightness of the bounds,
we present algorithms to generate matrices with user-specified 
coherence and leverage scores. These algorithms, the three sampling strategies,
and a large variety of condition number bounds are implemented
in the Matlab toolbox \textsl{kappa\_SQ\_v3}.
\end{abstract}

\begin{keywords} 
condition number,  singular values, leverage scores,
sums of random matrices, majorization, preconditioning, QR factorization,
\end{keywords}

\begin{AM} 
65F08, 65F10, 65F20, 65F25, 65F35, 68W20, 15A12, 15A18, 15A42, 15B10, 15B52
\end{AM}

\section{Introduction}
Our paper was inspired by Avron, Maymounkov and Toledo's
\textsl{Blendenpik} algorithm and analysis \cite{AMTol10}.

\textsl{Blendenpik} is an iterative method for solving overdetermined
least squares/regres\-sion problems $\min_x{\|Ax-b\|_2}$ with the Krylov
space method \textsl{LSQR} \cite{PS82}.  In order to accelerate
convergence, \textsl{Blendenpik} constructs a preconditioner $R_s$ and solves
instead the preconditioned least squares problem
$\min_z{\|AR_s^{-1}z-b\|_2}$. The solution to the original problem 
is recovered by solving a linear system with coefficient matrix~$R_s$.
The innovative feature is the construction of the preconditioner $R_s$
by a random sampling method.

\subsection{Motivation}
The purpose of our paper is a thorough experimental and analytical
investigation of random sampling strategies for producing efficient
preconditioners.  The challenge is to ensure not only that $R_s$ is
nonsingular, but also that $AR_s^{-1}$ is well-conditioned with
respect to inversion, which is required for fast convergence and numerical 
stability.

Here is a conceptual point of view of how \textsl{Blendenpik}
constructs the preconditioner: First it ``smoothes out'' the rows of $A$ 
by applying a randomized unitary transform $F$, and then it uniformly samples
(i.e. selects) a small number of rows $M_s$ from $FA$. At last it
computes a QR factorization of the smaller sampled matrix,
$M_s=Q_sR_s$, where the triangular factor $R_s$ serves as the preconditioner.

The neat and crucial observation in \cite{AMTol10} is to realize that sampling
rows from $FA$ amounts, conceptually, to sampling rows from an
orthonormal basis of $FA$. That is, if the columns of $Q$ 
represent an orthonormal basis for the column space of $FA$,
and if $S$ is a sampling matrix then $SQ$ has the same
two-norm condition number as $AR_s^{-1}$.  This means, it suffices to
consider sampling from matrices $Q$ with orthonormal columns.

The analysis in \cite{AMTol10} suggests that 
$SQ$ is well conditioned, if $Q$ has low ``coherence''.  Intuitively,
coherence gives information about the localization
or ``uniformity'' of the elements of $Q$.  Mathematically,
coherence is the largest (squared) norm of any row of $Q$.
For instance, if $Q$ consists of canonical vectors, then the
non-zero elements are concentrated in only a few rows, so that $Q$
has high coherence.  However, if $Q$ is a submatrix of a Hadamard
matrix, then all elements have the same magnitude, so that $Q$
has low coherence.

If $Q$ has low coherence, then, in the context of sampling, all rows
are equally important. Hence any sampled matrix $SQ$ with sufficiently
many rows is likely to have full rank. The purpose of the randomized
transform $F$ is to produce a matrix $FA$ whose orthonormal basis $Q$
has low coherence.

We were intrigued by the analysis of \textsl{Blendenpik} because 
it appears to be the first to exploit the concept of coherence for numerical 
purposes. We  also wanted to get a better understanding of 
the condition number bound for $SQ$ in
\cite[Theorem 3.2]{AMTol10}, which contains an unspecified constant,
and of the effect of uniform sampling strategies.

\subsection{Overview and main results}
We survey the contents of the paper, with a focus on the main results.

\subsubsection*{From preconditioned matrices to sampled matrices with 
orthonormal columns (Section~\ref{s_bpik})}
We start with a brief sketch of the
\textsl{Blendenpik} least squares solver (Section~\ref{s_bpikalg}), and
make the important transition from preconditioned matrices $AR_s^{-1}$
to sampled matrices $SQ$ with orthonormal columns, made possible by
the observation (\cite{AMTol10,RokT08} and Lemma~\ref{l_1}) that both 
have the same two-norm condition 
number\footnote{Here $\kappa(X)\equiv \|X\|_2\>\|X^{\dagger}\|_2$ denotes the 
Euclidean two-norm 
condition number with respect to inversion of a full rank matrix $X$.
The matrix $X^{\dagger}$ is the Moore-Penrose inverse of $X$.},
$$\kappa(AR_s^{-1})=\kappa(SQ).$$
Then we discuss the notion of coherence and its properties 
(Section~\ref{s_coh}). For a $m\times n$ matrix $Q$ with orthonormal 
columns, $Q^TQ=I_n$, the \textit{coherence} 
$$\mu\equiv\max_{1\leq j\leq m}{\|e_j^TQ\|_2^2}$$
is the largest squared row norm\footnote{The superscript $T$ denotes transpose,
and $I_n$ is the $n\times n$ identity matrix with columns $e_j$.}.

\subsubsection*{Sampling methods (Section~\ref{s_samp})}
We discuss three randomized methods for producing sampling matrices~$S$: 
Sampling without replacement (Section~\ref{s_without}), sampling
with replacement (Section~\ref{s_with}), and Bernoulli sampling
(Section~\ref{s_bernoulli}).  We show that Bernoulli sampling can be
viewed as a form of sampling without replacement (Section~\ref{s_relate}).

The sampling matrices $S$ from all three methods are constructed so that 
$S^TS$ is an unbiased estimator of the identity matrix.
The action of applying $S$ to a matrix $Q$ with orthonormal columns,
$SQ$, amounts to randomly sampling rows from~$Q$.

The numerical experiments (Section~\ref{s_sampcomp}) illustrate two points:
First, the three sampling methods behave almost identically, 
in terms of the percentage of sampled matrices $SQ$ that have full
rank and their condition numbers, in particular for small to moderate
sampling amounts. Second, those sampled matrices $SQ$ that have 
full rank tend to be
very well-conditioned, with condition numbers $\kappa(SQ)\leq  10$.

As a consequence  (Section~\ref{s_sampconc}),
we recommend sampling with replacement for  \textsl{Blendenpik}, because
it is fast, and it is easy to implement.

\subsubsection*{Numerical experiments}
Since random sampling methods can be expected to work well in the
asymptotic regime of very large matrix dimensions, we restrict all
numerical experiments to matrices of small dimension.

Furthermore, we consider only  matrices that have many
more rows than columns, $m\gg n$.  This is the situation where
random sampling methods can be most efficient.  In contrast,
random sampling methods are not efficient for matrices that are almost
square, because the number of rows in $SQ$ has to be at least
equal to $n$, otherwise $\rank(SQ)=n$ is not possible.

\subsubsection*{Condition number bounds based on coherence 
(Section~\ref{s_cohbound})}
We derive a probabilistic  bound, in terms of coherence, 
for the condition numbers of the sampled  matrices
(Theorem~\ref{t_cohbound} in Section~\ref{s_subcohbound}). The bound
applies to all three sampling methods.
From this we derive the following lower bound, not necessarily 
tight, on the required number of sampled rows.

\begin{preview}[Corollary~\ref{c_cohbound}]
Given a failure probability $0<\delta<1$, and a 
tolerance $0\leq \epsilon<1$.  To achieve the condition number bound
$\kappa(SQ)\leq \sqrt{\tfrac{1+\epsilon}{1-\epsilon}}$,
the number of rows from $Q$, sampled by any of three methods,
should be at least
\begin{eqnarray}\label{e_cohbound}
c\geq 3m\mu\>\frac{\ln(2n/\delta)}{\epsilon^2}.
\end{eqnarray}
\end{preview}

This suggests that one has to sample more rows for $SQ$
if $Q$ has high coherence ($\mu$ close to 1), 
if one wants a low condition number bound (small $\epsilon$), or
if one wants a high success probability (small $\delta$).

Numerical experiments (Section~\ref{s_cohex}) illustrate that the
bounds are informative for matrices with sufficiently low coherence $\mu$ and
sufficiently high aspect ratio $m/n$.
Our bounds have the following advantages (Section~\ref{s_cohconc}):
\begin{enumerate}
\item They are tighter than those in \cite[Theorem 3.2]{AMTol10}
because they are non-asymp\-totic, with all constants explicitly specified.
\item They apply to three different sampling methods.
\item They imply a lower bound, of $\Omega\left(m\mu\ln{n}\right)$,
on the required number of sampled rows.
\item They are realistic and informative --
even for matrices of small dimension
and the stringent requirement of a 99 percent success probability.
\end{enumerate}

\subsubsection*{Condition number bounds based on leverage scores, 
for uniform sampling with replacement (Section~\ref{s_leverage})}
The goal is to tighten
the coherence-based bounds from Section~\ref{s_cohbound} by making use of all
the row norms of $Q$, instead of just the largest one.
To this end we introduce \textit{leverage scores} 
(Section~\ref{s_levscore}), which are the squared row norms of $Q$,
$$\ell_j=\|e_j^TQ\|_2^2, \qquad  1\leq j\leq m.$$
We use them to derive a bound for uniform sampling with
replacement (Theorem~\ref{t_levbound} in Section~\ref{s_levbound}).
Then we present a more easily computable bound, in terms of just 
a few of the largest
leverage scores (Section~\ref{s_clevbound}). It 
implies the following lower bound, not necessarily
tight, on the number of samples.

\begin{preview}[Corollary~\ref{c_levbound}]
Given a failure probability $0<\delta<1$, a tolerance $0\leq \epsilon<1$,
and a labeling of leverage scores in non-increasing order,
$$\mu=\ell_{[1]}\geq \cdots\geq \ell_{[m]}.$$ 
To achieve the condition number bound 
$\kappa(SQ)\leq \sqrt{\tfrac{1+\epsilon}{1-\epsilon}}$,
the number of rows from $Q$, sampled uniformly with replacement,
should be at least
\begin{eqnarray}\label{e_lsbound}
c\geq \tfrac{2}{3}m\>(3\tau+\epsilon\mu)\>
\frac{\ln(2n/\delta)}{\epsilon^2},
\end{eqnarray}
where $t\equiv\left\lfloor 1/\mu\right\rfloor$ and
$\tau\equiv \mu\>\sum_{j=1}^t{\ell_{[j]}}+(1-t\,\mu)\,\ell_{[t+1]}$.
\end{preview}
\medskip

We show (Section~\ref{s_ancomp}) that (\ref{e_lsbound}) is
indeed tighter than (\ref{e_cohbound}).
This is confirmed by numerical experiments (Section~\ref{s_numcomp}).
The difference becomes more drastic for matrices $Q$ with
widely varying non-zero leverage scores,
and can be as high as ten percent.
Hence (Section~\ref{s_lqconc}), when it comes to lower bounds for the 
number of rows sampled
uniformly with replacement, we recommend (\ref{e_lsbound}) 
over~(\ref{e_cohbound}).

\subsubsection*{Algorithms for generating matrices with prescribed
coherence and leve\-rage scores (Section~\ref{s_genalg})} 
The purpose
is to make it easy to investigate the efficiency of the sampling
methods in Section~\ref{s_samp}, and test the tightness of the bounds
in Sections \ref{s_cohbound} and~\ref{s_leverage}.

To this end we present algorithms for generating matrices with
prescribed leverage scores and coherence (Section~\ref{s_alg}), and
for generating particular leverage score distributions with prescribed
coherence (Section~\ref{s_precoh}).  Furthermore we present two
classes of structured matrices with prescribed coherence that are easy
and fast to generate (Section~\ref{s_mat}).  The basis for the
algorithms is the following majorization result.

\begin{preview}[Theorem~\ref{t_maj}]
Given integers $m\geq n$
and a vector $\ell$ with $m$ elements that satisfy
$0\leq \ell_j \leq 1$ and $\sum_{j=1}^m{\ell_j}=n$,
there exists a $m\times n$ matrix $Q$ with orthonormal 
columns that has leverage scores $\|e_j^TQ\|_2^2=\ell_j$, $1\leq j\leq m$,
and coherence $\mu=\max_{1\leq j\leq m}{\ell_j}$.
\end{preview}

\subsubsection*{Bound for two-norms of diagonally scaled matrices
(Section~\ref{s_app2})}
The bound (\ref{e_lsbound}) is based on a special case of the 
following general
bound for the two-norm of diagonally scaled matrices. 

\begin{preview}[Theorem~\ref{t_2}]
Let $Z$ be a $m\times n$ matrix with $\rank(Z)=n$ and largest
squared row norm $\mu_z\equiv\max_{1\leq j\leq m}{\|e_j^TZ\|_2^2}$.
Let $D$ be a $m\times m$ 
non-negative diagonal matrix, and a labeling of diagonal elements
in non-increasing order,
$$\|D\|_2=d_{[1]}\geq \cdots\geq d_{[m]}\geq 0.$$ 
If $t\equiv\left\lfloor (\|Z^{\dagger}\|_2^2\>\mu_z)^{-1}\right\rfloor$, then either
$$\|DZ\|_2^2\leq \mu_z\sum_{j=1}^t{d_{[j]}^2}+
\left(\|Z\|_2^2-t\,\mu_z\right)\,d_{[t+1]}^2\qquad \text{ if } \|Z\|_2^2-t\,\mu_z\leq \mu_z$$
or
$$\|DZ\|_2^2\leq \mu_z\sum_{j=2}^{t+1}{d_{[j]}^2}+
\left(\|Z\|_2^2-t\,\mu_z\right)\,d_{[1]}^2.\qquad \text{ if } \|Z\|_2^2-t\,\mu_z > \mu_z.$$

\end{preview}

\subsubsection*{Matlab toolbox}
In order to perform the experiments in this paper, 
we developed a Matlab toolbox \texttt{kappaSQ\_v3} 
with a user-friendly interface \cite{impl}.
The toolbox contains
implementations of the three random sampling methods in Section~\ref{s_samp},
the matrix generation algorithms in Section~\ref{s_genalg}, 
the bounds in Sections \ref{s_cohbound} and~\ref{s_leverage}, and a variety
of other
condition number bounds. It also allows the user to input her/his own
matrices.

\subsubsection*{Proofs (Sections \ref{s_app}, \ref{s_app2} and~\ref{s_pre})}
All proofs, except those for Sections \ref{s_bpik} and~\ref{s_samp},
have been relegated to these three sections, which form the appendix. 

Section~\ref{s_app} contains the proofs for 
Sections \ref{s_cohbound} and~\ref{s_leverage}, which are based on 
two matrix concentration inequalities:
A Chernoff bound  (Section~\ref{s_chernoff}), and a Bernstein bound
(Section~\ref{s_bernstein}).

Section~\ref{s_app2} contains the proofs
for the easily computable bounds in Sections \ref{s_clevbound} 
and~\ref{s_ancomp},
together with the majorization results (Section~\ref{s_tns}) 
required for the proofs.

The majorization results in Section~\ref{s_pre} represent the foundation
for the algorithms in Section~\ref{s_genalg}.

\subsubsection*{Future work (Section~\ref{s_future})}
We list a few issues that suggest
themselves immediately as a follow up to this paper.

\subsection{Literature}
Existing randomized least squares methods are based on 
randomized projections. This means, conceptually they
multiply $A$ by a random matrix~$F$, and then sample
a few rows from $FA$.

The algorithms in \cite{BDr09,DMM06a,DMMS10} solve a smaller sampled
problem by a direct method.  Like \textsl{Blendenpik} \cite{AMTol10},
the algorithm
in \cite{RokT08} computes a preconditioner from the QR factorization
of a sampled submatrix, but then solves the preconditioned problem
by applying the conjugate gradient method to the normal equations.
The parallel solver \textsl{LSRN} \cite{MSM11} computes a
preconditioner from the SVD of a sampled submatrix, and then
solves the preconditioned problem with an iterative
method. This solver applies to general matrices
rather than just those of full column rank.

As for randomized algorithms in general, the excellent surveys
\cite{HMT09,Mah11} provide clear analyses and good intuition.

\subsection{Notation}
The norm $\|\cdot\|_2$ denotes the Euclidean two-norm, 
and the two-norm \textit{condition number} with respect to inversion
of a real $m\times n$ matrix $Z$ with $\rank(Z)=n$ is denoted by
$\kappa(Z)\equiv\|Z\|_2\|Z^{\dagger}\|_2$, where $Z^{\dagger}$ is the
Moore-Penrose inverse.
The $k\times k$ \textit{identity matrix} is 
$I_k=\begin{pmatrix}e_1 &\ldots &e_k\end{pmatrix}$, and its columns are
the \textit{canonical vectors} $e_j$, $1\leq j\leq k$.

The \textit{probability} of an event $\mathcal{X}$ is denoted by
$\Prob[\mathcal{X}]$, and the \textit{expected value} of a random
variable $X$ is denoted by $\E[X]$.
\section{The Blendenpik algorithm, and coherence}\label{s_bpik}
We describe the \textsl{Blendenpik} algorithm 
for solving least squares problems (Section~\ref{s_bpikalg}), and
present the notion of coherence (Section~\ref{s_coh}).
 
\subsection{Algorithm}\label{s_bpikalg}
The \textsl{Blendenpik} algorithm \cite[Algorithm 1]{AMTol10} solves
full column rank least squares problems with the Krylov space
method \textsl{LSQR} \cite{PS82} and a randomized preconditioner.
Algorithm~\ref{alg_bpik} presents a conceptual sketch of
\textsl{Blendenpik}. The subscript ``$s$'' denotes quantities
associated with the sampled matrix.

\begin{algorithm}
\caption{Sketch of \textsl{Blendenpik} \cite{AMTol10}}\label{alg_bpik}
\begin{algorithmic}
\REQUIRE $m\times n$ matrix $A$ with $m\geq n$ and $\rank(A)=n$,
$m\times 1$ vector $b$\\
$\qquad$ $m\times m$ random unitary matrix $F$\\
$\qquad$ $k\times n$ sampling matrix $S$ with $k\geq n$\\
\ENSURE Solution of $\min_x{\|Ax-b\|_2}$\\
$\qquad$\\
\STATE $M=FA$ $\qquad$ \COMMENT{Improve coherence}\\
\STATE $M_s=SM$  $\qquad$ \COMMENT{Sample for preconditioner}\\

\STATE Thin QR factorization $M_s=Q_sR_s$
$\qquad$ \COMMENT{Generate preconditioner}\\
\STATE Determine solution $y$ to $\min_z{\|AR_s^{-1}z-b\|_2}$
$\qquad$ \COMMENT{Solve preconditioned problem}\\
\STATE Solve $R_s\hat{x}=y$ $\qquad$ 
\COMMENT{Recover solution to original problem}
\end{algorithmic}
\end{algorithm}

The matrix $F$
is the product of a random diagonal matrix with $\pm 1$ entries, and
a unitary transform, such as a Walsh Hadamard transform, or a discrete Fourier,
Hartley or cosine transform \cite[Section 3.2]{AMTol10}. 
The transformed matrix $M=FA$ is $m\times n$ with $m\geq n$ and $\rank(M)=n$. 

The sampling matrix $S$ selects $k\geq n$ rows
from the transformed matrix $M$.  We discuss different types of
sampling matrices in Section~\ref{s_samp}.  The $k\times n$ sampled
matrix $M_s$ has a thin QR decomposition $M_s=Q_sR_s$ where $Q_s$ is
$k\times n$ with orthonormal columns and $R_s$ is $n\times n$ upper
triangular.

The basis for the analysis is the thin QR decomposition
$M=QR$, where $Q$ is $m\times n$ with 
orthonormal columns and $R$ is $n\times n$ upper triangular. 
This QR decomposition is \textit{not} computed.
The next result links the condition number of the 
preconditioned matrix to that of the matrix $SQ$,
see also \cite[Section~3.1]{AMTol10} and
\cite[Theorem 1]{RokT08}.

\begin{lemma}\label{l_1}
With the notation in Algorithm~\ref{alg_bpik},
if $\rank(M_s)=n$, then 
$$\kappa(AR_s^{-1})=\kappa(SQ).$$
\end{lemma}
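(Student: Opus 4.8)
The plan is to express both $AR_s^{-1}$ and $SQ$ through the triangular factors $R$ and $R_s$, and then exploit the fact that left-multiplication by a matrix with orthonormal columns leaves the singular values, and hence the two-norm condition number, unchanged. Throughout I would use that the hypotheses $\rank(M)=n$ and $\rank(M_s)=n$ make $R$ and $R_s$ invertible.

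First I would rewrite $SQ$. From the (uncomputed) thin factorization $M=QR$ with $R$ invertible we have $Q=MR^{-1}$, so that, using $M_s=SM$ together with $M_s=Q_sR_s$,
$$SQ=SMR^{-1}=M_sR^{-1}=Q_s\,R_sR^{-1}.$$
Next I would rewrite $AR_s^{-1}$. Since $F$ is orthogonal, $F^{-1}=F^T$, so $M=FA$ gives $A=F^TM=F^TQR$, whence
$$AR_s^{-1}=F^TQ\,RR_s^{-1}.$$

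The key step is to strip off the orthonormal factors. Because $Q_s$ has orthonormal columns, $(Q_sX)^T(Q_sX)=X^TX$ for every $X$, so $SQ=Q_s(R_sR^{-1})$ has exactly the same singular values as the $n\times n$ matrix $R_sR^{-1}$; therefore $\kappa(SQ)=\kappa(R_sR^{-1})$. Likewise $F^T$ is orthogonal and $Q$ has orthonormal columns, so $AR_s^{-1}=F^TQ(RR_s^{-1})$ shares its singular values with $RR_s^{-1}$, giving $\kappa(AR_s^{-1})=\kappa(RR_s^{-1})$.

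Finally I would observe that $RR_s^{-1}$ and $R_sR^{-1}$ are mutual inverses. Since the singular values of an invertible matrix $X$ are the reciprocals of those of $X^{-1}$, we have $\kappa(X)=\kappa(X^{-1})$, and hence $\kappa(RR_s^{-1})=\kappa(R_sR^{-1})$. Chaining the three equalities yields $\kappa(AR_s^{-1})=\kappa(SQ)$. The argument is essentially bookkeeping with orthogonal factors; the only points requiring care are the invertibility of $R_s$ (which is precisely where the hypothesis $\rank(M_s)=n$ enters, since otherwise $AR_s^{-1}$ is undefined) and the recognition that the two triangular quotients are inverses of each other, so that no singular-value alignment issue or unspecified constant can creep in.
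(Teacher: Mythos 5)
Your proof is correct and follows essentially the same route as the paper: both reduce $\kappa(AR_s^{-1})$ and $\kappa(SQ)$ to the triangular quotients $RR_s^{-1}$ and $R_sR^{-1}$ by peeling off orthonormal left factors, and then use $\kappa(X)=\kappa(X^{-1})$ to equate them. The paper writes this as a single chain of equalities, but the underlying ingredients are identical.
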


\begin{proof}
From $FA=M=QR$  and the fact that the 2-norm is invariant
under premultiplication by matrices with orthonormal columns, it follows that
\begin{eqnarray*}
\kappa(AR_s^{-1})&=&\kappa(MR_s^{-1})=\kappa(RR_s^{-1})
=\kappa(R_sR^{-1})=\kappa(M_sR^{-1})=\kappa(SMR^{-1})\\
&=&\kappa(SQ).
\end{eqnarray*}
\end{proof}

In Sections \ref{s_cohbound} and~\ref{s_leverage}
we derive bounds for the condition number
of the preconditioned matrix, $\kappa(AR_s^{-1})$.
Our bounds are tighter than those in \cite[Theorem 3.2]{AMTol10},
because they have all constants explicitly specified,
and apply to three different sampling strategies.
Since Lemma~\ref{l_1} implies $\kappa(AR_s^{-1})=\kappa(SQ)$,
we state the bounds for $\kappa(SQ)$ only.
An important ingredient in these bounds is the coherence of~$Q$.

\subsection{Coherence}\label{s_coh}
Coherence gives information about the localization or ``uniformity''
of the elements in an orthonormal basis.  The more general concept of
\textit{mutual coherence} between two orthonormal bases was introduced
in \cite[\S VII]{DoH01}, in the context of signal processing and
computational harmonic analysis, to describe a condition for the
existence of sparse representations of signals.  What we use here is a
special case, and can be viewed as a measure for how close an
orthonormal basis is to sharing a vector with a canonical basis.

\begin{definition}[Definition 3.1 in \cite{AMTol10}, Definition 1.2
in \cite{CanR09}]\label{d_co}
Let $Q$ be  a real $m\times n$ matrix with orthonormal columns, $Q^TQ=I_n$,
then the {\rm coherence} of $Q$ is 
\begin{eqnarray*}\label{e_coherence}
\mu\equiv \max_{1\leq j\leq m}\|e_j^TQ\|_2^2.
\end{eqnarray*}

If the columns of $Q$ are an orthonormal basis for the column space of
a matrix~$M$, then the coherence of $M$ is $\mu$.
\end{definition}

The second part of Definition~\ref{d_co} emphasizes that coherence is really
a property of the column space, hence basis-independent.  In other
words, if $\hat{Q}=QV$, where $V$ is a real $n\times n$ orthogonal
matrix, then $\hat{Q}$ and $Q$ have the same coherence.

The range for coherence is $\tfrac{n}{m}\leq \mu\leq 1$.  If $Q$ is a
$m\times n$ submatrix of the $m\times m$ Hadamard matrix, then
$\mu=n/m$.  If a column of $Q$ is a canonical vector, then $\mu=1$.
Hence an orthonormal basis has high coherence if it shares a vector
with a canonical basis.

There are other
definitions of coherence that differ from the above by factors
depending on the matrix dimensions \cite[Definition 1]{Recht11},
\cite[Definition 1]{TalR10}. However, the notion of 
\textit{statistical coherence} in Bayesian analysis
\cite{Lind78} appears to be unrelated.

\section{Sampling Methods}\label{s_samp}
We present three different types of sampling methods: 
Sampling without replacement
(Section~\ref{s_without}),
sampling with replacement  (Section~\ref{s_with}),
and Bernoulli sampling (Section~\ref{s_bernoulli}).
We show that Bernoulli sampling 
can be viewed as a form of sampling
without replacement (Section~\ref{s_relate}).
The numerical experiments   illustrate
that there is little difference among the three methods
for small to moderate amounts of sampling (Section~\ref{s_sampcomp}). 
Hence we recommend sampling with replacement 
for Algorithm~\ref{alg_bpik} (Section~\ref{s_sampconc}).

The sampling matrices $S$ in all three methods are scaled so that $S^TS$
is an unbiased estimator of the identity matrix.

\subsection{Sampling without replacement}\label{s_without}
The obvious sampling strategy, in Algorithm~\ref{alg_without},
picks the requested number of
rows, so that the sampling matrix~$S$ is just a scaled submatrix of a 
permutation matrix.

Uniform sampling without replacement can be implemented via 
\textit{random permutations}\footnote{We thank an
anonymous reviewer for this advice.}.
A permutation $\pi_1,\ldots, \pi_m$ of the integers $1,\ldots, m$
is a \textit{random permutation}, if it is equally likely to be one of $m!$
possible permutations \cite[pages 41 and 48]{MitzUpf}.

\begin{algorithm}
\caption{Uniform sampling without replacement
\cite{GT11,GrN10}}\label{alg_without}
\begin{algorithmic}
\REQUIRE Integers $m\geq 1$ and $1\leq c\leq m$
\ENSURE $c\times m$ sampling matrix $S$ with $\E[S^TS]=I_m$
\STATE
\STATE Let $k_1,\ldots,k_m$ be a random permutation of $1,\ldots, m$
\smallskip

\STATE $S=\sqrt{\tfrac{m}{c}}\>
\begin{pmatrix}e_{k_1} & \ldots & e_{k_c}\end{pmatrix}^T$
\end{algorithmic}
\end{algorithm}

The following lemma presents the probability that sampling without replacement 
picks a particular row.

\begin{lemma}\label{l_without}
If  Algorithm~\ref{alg_without} samples $c$ out of $m$ indices, then the
probability that a particular index is picked equals $c/m$.
\end{lemma}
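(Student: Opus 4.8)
The plan is to translate the probabilistic statement into a statement about the random permutation $k_1,\ldots,k_m$ produced in the first line of Algorithm~\ref{alg_without}, and then exploit the exchange symmetry of a uniformly random permutation. Fix a particular index $i\in\{1,\ldots,m\}$. By construction, $S$ retains exactly the rows indexed by $k_1,\ldots,k_c$, so the event ``index $i$ is picked'' is precisely the event $\{i\in\{k_1,\ldots,k_c\}\}$, which I would decompose as the disjoint union $\bigcup_{j=1}^{c}\{k_j=i\}$.

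First I would establish the single-position marginal $\Prob[k_j=i]=1/m$ for each fixed $j$. This follows from the defining property of a random permutation recalled just before the lemma: all $m!$ orderings are equally likely, and the number of orderings with $k_j=i$ is $(m-1)!$, obtained by arranging the remaining $m-1$ indices in the remaining $m-1$ slots. Hence $\Prob[k_j=i]=(m-1)!/m!=1/m$, independently of $j$.

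Next I would observe that the $c$ events $\{k_j=i\}$, $j=1,\ldots,c$, are pairwise disjoint, because a permutation places the value $i$ in exactly one position, so $i$ cannot simultaneously equal $k_j$ and $k_{j'}$ for $j\neq j'$. Summing the per-position probabilities over the disjoint union then yields $\Prob[\,\text{index } i \text{ is picked}\,]=\sum_{j=1}^{c}\Prob[k_j=i]=c/m$, as claimed.

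I do not expect a genuine obstacle here; the only step warranting care is the disjointness, which is precisely what allows me to add the per-position probabilities rather than resort to inclusion--exclusion. As a cross-check I would also note the purely combinatorial route: the number of permutations with $i\in\{k_1,\ldots,k_c\}$ is $c\cdot(m-1)!$ (choose which of the first $c$ slots holds $i$, then arrange the rest), so the probability is $c(m-1)!/m!=c/m$. I would present the symmetry argument as the main line and keep the counting argument as a one-line confirmation.
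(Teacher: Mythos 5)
Your proof is correct, but it takes a genuinely different route from the paper's. You decompose the event $\{i\in\{k_1,\ldots,k_c\}\}$ into the disjoint union $\bigcup_{j=1}^{c}\{k_j=i\}$ and use the exchange symmetry of the uniform distribution on all $m!$ permutations to get $\Prob[k_j=i]=1/m$ for each position, summing to $c/m$; this works directly off the definition of a random permutation recalled just before the lemma, and your counting cross-check ($c\cdot(m-1)!$ favorable permutations out of $m!$) is a nice sanity check. The paper instead computes the complementary probability that index $r$ is \emph{not} sampled, modeling the permutation as sequential draws without replacement and telescoping the product $\prod_{t=1}^{c}\tfrac{m-t}{m-t+1}=\tfrac{m-c}{m}$, then takes the complement. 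Both arguments are elementary and short; yours avoids conditioning entirely and generalizes immediately to statements about the position of $i$ in the permutation, while the paper's sequential viewpoint is the one that recurs later (e.g.\ in the conditioning argument relating Bernoulli sampling to sampling without replacement), which is presumably why the authors chose it.
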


\begin{proof}
The probability that some index, say~$r$, is not sampled in
the first trial is $1-\tfrac{1}{m}=\tfrac{m-1}{m}$. 
Now there are only $m-1$ indices left.
So the probability that index~$r$ is not sampled in the second trial is
$1-\tfrac{1}{m-1}=\tfrac{m-2}{m-1}$.  Repeating this argument shows 
that with probability $\prod_{t=1}^c\frac{m-t}{m-t+1} = \frac{m-c}{m}$
index~$r$ is not sampled in $c$ trials.

The complementary event, the probability that index~$r$ is sampled,
equals $1-\tfrac{m-c}{m} = \tfrac{c}{m}$.
\end{proof}

\subsection{Sampling with replacement}\label{s_with}
This is the sampling strategy that appears to be analyzed in~\cite{AMTol10}. 
It samples exactly the requested number of rows, but with replacement,
which means a row may be sampled more than once.
Algorithm~\ref{alg_with}  is  the same
as the \textsl{EXACTLY(c)} algorithm \cite[Algorithm 3]{DMMS10}
with uniform probabilities,
which is also used in the \textsl{BasicMatrixMultiplication Algorithm}
\cite[Fig. 2]{DKM06}. 

\begin{algorithm}
\caption{Uniform sampling with replacement \cite{DKM06,DMMS10}}\label{alg_with}
\begin{algorithmic}
\REQUIRE Integers $m\geq 1$ and $1\leq c\leq m$
\ENSURE $c\times m$ sampling matrix $S$ with $\E[S^TS]=I_m$
\STATE
\FOR{$t = 1 : c$}
\STATE Sample $k_t$ from $\{1,\ldots,m\}$ with probability $1/m$,\\
\STATE independently and with replacement\\
\ENDFOR
\smallskip

\STATE $S=\sqrt{\tfrac{m}{c}}\>
\begin{pmatrix}e_{k_1} & \ldots & e_{k_c}\end{pmatrix}^T$
\end{algorithmic}
\end{algorithm}

Sampling with replacement (Algorithm~\ref{alg_with}) is often easier
to analyze and implement than sampling without replacement
(Algorithm~\ref{alg_without}), and it can also
be more robust to errors \cite[\S 1.2]{MitzUpf}.

\subsection{Bernoulli sampling}\label{s_bernoulli}
The sampling strategy in Algorithm~\ref{alg_bernoulli}
is implemented in \textsl{Blendenpik}  \cite[Algorithm 1]{AMTol10}.  
Following \cite[Section A]{GrN10}, we use the term
\textsl{Bernoulli sampling}, because the strategy
treats each row as an independent, identically 
distributed Bernoulli random variable. Each row is either sampled or not, 
with the same probability for each row.
Algorithm~\ref{alg_bernoulli} produces a 
$m\times m$ square matrix $S$ --
in contrast to Algorithms~\ref{alg_without} and~\ref{alg_with},
which produce $c\times m$ matrices.

\begin{algorithm}
\caption{Bernoulli sampling
\cite{AMTol10,GT11,GrN10}}\label{alg_bernoulli}
\begin{algorithmic}
\REQUIRE Integers  $m\geq 1$ and $1\leq c\leq m$ 
\ENSURE $m\times m$ sampling matrix $S$ with $\E[S^TS]=  I_m$
\STATE
\STATE $S=0_{m\times m}$   
\FOR{$t = 1 : m$}
\STATE \medskip
$S_{tt} = \sqrt{\tfrac{m}{c}}\>\begin{cases}
1 & \text{with probability $\tfrac{c}{m}$} \\
0 & \text{with probability  $1-\tfrac{c}{m}$} 
\end{cases}$
\ENDFOR
\end{algorithmic}
\end{algorithm}

The number of sampled rows, which is equal to the number of non-zero
diagonal elements in $S$, is not known a priori, but the expected
number of sampled rows is~$c$. The lemma below shows that the actual number of
rows picked by Bernoulli sampling is characterized by a binomial
distribution \cite[Section 2.2.2]{Ross}.

\begin{lemma}\label{l_bernoulli}
If Algorithm~\ref{alg_bernoulli} samples from $m$ indices with probability
$\gamma\equiv c/m$, then
the probability that it picks exactly $k$ indices equals
$\binom{m}{k}\>\gamma^{k}\>(1-\gamma)^{m-k}$.
\end{lemma}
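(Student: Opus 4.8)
The plan is to recognize the quantity of interest as a sum of independent Bernoulli indicators and then invoke the standard derivation of the binomial probability mass function. First I would introduce, for each $t\in\{1,\ldots,m\}$, the indicator random variable $X_t$ that equals $1$ if the diagonal entry $S_{tt}$ is set to the nonzero value $\sqrt{m/c}$ in Algorithm~\ref{alg_bernoulli}, and $0$ otherwise. By construction of the algorithm, each $X_t$ satisfies $\Prob[X_t=1]=c/m=\gamma$ and $\Prob[X_t=0]=1-\gamma$, and the loop over $t$ draws these values independently. The number of indices picked equals the number of nonzero diagonal entries, namely $K=\sum_{t=1}^m X_t$, so the claim is precisely that $K$ follows the binomial distribution, $\Prob[K=k]=\binom{m}{k}\gamma^k(1-\gamma)^{m-k}$.

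Next I would evaluate $\Prob[K=k]$ by summing over which indices are selected. For any fixed subset $T\subseteq\{1,\ldots,m\}$ of size $k$, the event that precisely the indices in $T$ are sampled, and none outside $T$, is $\bigcap_{t\in T}\{X_t=1\}\cap\bigcap_{t\notin T}\{X_t=0\}$. By independence of the $X_t$ this event has probability $\gamma^k(1-\gamma)^{m-k}$, a value that does not depend on the particular choice of $T$. Since $\{K=k\}$ is the disjoint union of these events over all $\binom{m}{k}$ subsets $T$ of size $k$, summing their probabilities gives $\Prob[K=k]=\binom{m}{k}\gamma^k(1-\gamma)^{m-k}$, as desired.

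This is the textbook derivation of the binomial distribution, so I do not anticipate any genuine obstacle. The only points that require care are verifying, from the structure of the for-loop, that the $m$ trials are mutually independent, so that the probability of a fixed sample pattern factors as a product, and confirming that the $\binom{m}{k}$ size-$k$ index sets yield mutually exclusive events, so that plain summation, rather than inclusion--exclusion, is valid. Both facts follow immediately from the independent, identically distributed Bernoulli draws that define~$S$ in Algorithm~\ref{alg_bernoulli}.
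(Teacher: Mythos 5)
Your proposal is correct and follows essentially the same route as the paper: both identify the $m$ diagonal entries as independent Bernoulli trials with success probability $\gamma$ and conclude that the number of successes is binomial. The paper simply cites the binomial distribution at that point, whereas you additionally spell out the textbook derivation by summing over the $\binom{m}{k}$ disjoint size-$k$ index sets, which is a harmless elaboration.
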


\begin{proof}
Determining the diagonal elements of the $m\times m$ sampling matrix
$S$ in Algorithm~\ref{alg_bernoulli}
can be viewed as performing $m$ independent trials, where trial $t$ is
a success ($S_{tt}\neq 0$) with probability $\gamma$,
and a failure ($S_{tt}=0$) with probability $1-\gamma$.
The probability of $k$ successes is given by the binomial distribution 
$\binom{m}{k}\>\gamma^{k}\>(1-\gamma)^{m-k}$.
\end{proof}

\subsection{Relating Bernoulli sampling and sampling without 
replacement}\label{s_relate}
We show  that Bernoulli sampling (Algorithm~\ref{alg_bernoulli})
is the same as first determining the
number of samples with a binomial distribution
(motivated by Lemma~\ref{l_bernoulli}), and then sampling
without replacement (Algorithm~\ref{alg_without}). This is
described in Algorithm~\ref{alg_equiv} below.

\begin{algorithm}
\caption{Simulating Algorithm~\ref{alg_bernoulli} with 
Algorithm~\ref{alg_without}}\label{alg_equiv}
\begin{algorithmic}
\REQUIRE Integers  $m\geq 1$ and $1\leq c\leq m$ 
\ENSURE $\tilde{c}\times m$ sampling matrix $S$ with $\E[S^TS]=  I_m$\\
$\qquad\quad$ that ``behaves like'' a sampling matrix
generated by Algorithm~\ref{alg_bernoulli}
\STATE
\STATE $\gamma\equiv c/m$
\smallskip

\STATE Sample $\tilde{c}$ from $\{1,\ldots,m\}$ where
$\Prob[\tilde{c}=k] = \binom{m}{k}\>\gamma^k\>(1-\gamma)^{m-k}$
\smallskip

\STATE Use Algorithm~\ref{alg_without} to sample $\tilde{c}$ 
indices $k_1,\ldots,k_{\tilde{c}}$ uniformly and without 
replacement
\smallskip

\STATE $S=\sqrt{\tfrac{m}{\tilde{c}}}\>
\begin{pmatrix}e_{k_1} & \ldots & e_{k_{\tilde{c}}}\end{pmatrix}^T$
\end{algorithmic}
\end{algorithm}

Below we describe the sense in which Algorithm~\ref{alg_equiv}
``behaves like'' Bernoulli sampling in Algorithm~\ref{alg_bernoulli}.

\begin{lemma}
The probability that Algorithm~\ref{alg_equiv} picks a particular
index equals $\gamma=c/m$.
\end{lemma}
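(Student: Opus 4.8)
The plan is to condition on the random sample size $\tilde{c}$ and then combine Lemma~\ref{l_without} with a single moment computation for the binomial distribution. Fix a particular index~$r$, and let $\mathcal{A}_r$ denote the event that Algorithm~\ref{alg_equiv} picks index~$r$. Since the value $\tilde{c}$ is drawn first and the uniform sampling without replacement is performed only afterwards, the law of total probability gives
\begin{equation*}
\Prob[\mathcal{A}_r]=\sum_{k=0}^m \Prob[\mathcal{A}_r\mid \tilde{c}=k]\>\Prob[\tilde{c}=k].
\end{equation*}

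First I would evaluate the conditional probabilities. Conditioned on the event $\tilde{c}=k$, the third step of Algorithm~\ref{alg_equiv} invokes Algorithm~\ref{alg_without} to sample exactly $k$ indices uniformly without replacement, so Lemma~\ref{l_without} applies directly and yields $\Prob[\mathcal{A}_r\mid \tilde{c}=k]=k/m$ (the degenerate case $k=0$ gives probability $0$, consistent with the same formula). Substituting this together with the binomial weights $\Prob[\tilde{c}=k]=\binom{m}{k}\gamma^k(1-\gamma)^{m-k}$ from the second step produces
\begin{equation*}
\Prob[\mathcal{A}_r]=\frac{1}{m}\sum_{k=0}^m k\>\binom{m}{k}\>\gamma^k\>(1-\gamma)^{m-k}.
\end{equation*}

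Finally I would recognize the remaining sum as the mean of a binomial random variable with parameters $m$ and $\gamma$, which equals $m\gamma$. This can be justified in one line, either by the identity $k\binom{m}{k}=m\binom{m-1}{k-1}$ followed by the binomial theorem, or simply by linearity of expectation over the $m$ independent Bernoulli$(\gamma)$ trials that generate this distribution. Either way the sum collapses to $m\gamma$, so that $\Prob[\mathcal{A}_r]=\tfrac{1}{m}\cdot m\gamma=\gamma=c/m$, as claimed.

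As for difficulty, there is essentially no hard step: the argument is a single-pass conditioning computation. The only point requiring care is to apply Lemma~\ref{l_without} conditionally, on a fixed value $k$ of the random size $\tilde{c}$, rather than to $\tilde{c}$ itself; once the conditioning is set up correctly the binomial mean finishes the proof immediately.
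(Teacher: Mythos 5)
Your proposal is correct and follows essentially the same route as the paper: condition on the realized sample size, apply Lemma~\ref{l_without} to get the conditional probability $k/m$, and collapse the resulting sum to $\gamma$ (the paper does this via the index-shift identity $k\binom{m}{k}=m\binom{m-1}{k-1}$ and the binomial theorem, which is one of the two justifications you mention for the binomial mean). No gaps.
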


\begin{proof}
Motivated by Lemma~\ref{l_bernoulli},
the actual number of samples $k$ in Algorithm~\ref{alg_equiv} 
is given by a binomial distribution. Once a specific $k$ has emerged,
one applies Lemma~\ref{l_without} to conclude that the probability that
Algorithm~\ref{alg_without} picks some index $r$ is $k/m$.

Now the probability that Algorithm~\ref{alg_equiv} picks some index $r$
is obtained by conditioning \cite[Section 3.5]{Ross}
on the number of samples, $k$, and equals
\begin{eqnarray*}
\lefteqn{\sum_{k=0}^m{\Prob\left[\text{$k$ indices sampled}\right]\>
\Prob\left[\text{index $r$ sampled} {\large{|}}\> 
\text{$k$ indices sampled}\right]}}\hspace{15pt}\\
& = &\sum_{k=1}^m{\binom{m}{k}\>\gamma^k\>\left(1-\gamma\right)^{m-k}
\>\frac{k}{m}}\\
& = &\gamma\>
\sum_{k=0}^{m-1}{\binom{m-1}{k}\gamma^{k}\left(1-\gamma\right)^{m-1-k}}
=\gamma\left(\gamma + (1-\gamma)\right)^{m-1}=\gamma,
\end{eqnarray*}
where the first equality follows from the zero summand for $k=0$.
\end{proof}

Finally, we can conclude that sampling with Algorithm~\ref{alg_equiv} is the 
same as sampling with Algorithm~\ref{alg_bernoulli}.

\begin{theorem}
Both,  Algorithms \ref{alg_equiv} and~\ref{alg_bernoulli} 
pick a particular set of indices $i_1, \ldots, i_c$ 
with probability $\gamma^c(1-\gamma)^{m-c}$.
\end{theorem}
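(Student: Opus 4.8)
The plan is to compute directly, for each of the two algorithms, the probability of selecting one fixed set of $c$ distinct indices $\{i_1,\ldots,i_c\}$, and to verify that both computations yield $\gamma^c(1-\gamma)^{m-c}$. Throughout I read ``picking the set $i_1,\ldots,i_c$'' as the event that the collection of sampled indices is exactly this (unordered) set of size~$c$.

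The Bernoulli case (Algorithm~\ref{alg_bernoulli}) is immediate. The diagonal entries $S_{tt}$ are set independently, each being nonzero with probability $\gamma$ and zero with probability $1-\gamma$. The sampled set equals $\{i_1,\ldots,i_c\}$ exactly when the $c$ entries at positions $i_1,\ldots,i_c$ are nonzero and the remaining $m-c$ are zero, so by independence this probability factors as $\gamma^c(1-\gamma)^{m-c}$, settling the claim for Algorithm~\ref{alg_bernoulli}.

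For Algorithm~\ref{alg_equiv} I would condition on the number of samples $\tilde c$. The sampled set can have size $c$ only when $\tilde c=c$, which occurs with probability $\binom{m}{c}\gamma^c(1-\gamma)^{m-c}$ from the binomial draw in the first step. The main --- and essentially only nontrivial --- step is then the conditional probability that the subsequent call to Algorithm~\ref{alg_without} returns exactly $\{i_1,\ldots,i_c\}$ given $\tilde c=c$. I would obtain it from the random-permutation description of Algorithm~\ref{alg_without}: since all $m!$ orderings $k_1,\ldots,k_m$ are equally likely, the first $c$ entries form a uniformly random ordered arrangement, and the number of orderings whose first $c$ entries form the set $\{i_1,\ldots,i_c\}$ is $c!\,(m-c)!$ (an order for these $c$ indices and an order for the remaining $m-c$). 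Hence this subset is returned with probability $c!\,(m-c)!/m!=1/\binom{m}{c}$, so every size-$c$ subset is equally likely. Conditioning on $\tilde c$ then gives
\begin{eqnarray*}
\Prob\left[\text{Algorithm~\ref{alg_equiv} picks }\{i_1,\ldots,i_c\}\right]
&=&\binom{m}{c}\gamma^c(1-\gamma)^{m-c}\cdot\frac{1}{\binom{m}{c}}\\
&=&\gamma^c(1-\gamma)^{m-c},
\end{eqnarray*}
which matches the Bernoulli computation and proves the theorem.
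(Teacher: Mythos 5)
Your proposal is correct and follows essentially the same route as the paper: the Bernoulli case by independence of the diagonal entries, and Algorithm~\ref{alg_equiv} by conditioning on $\tilde c=c$ and multiplying the binomial probability $\binom{m}{c}\gamma^c(1-\gamma)^{m-c}$ by the conditional probability $1/\binom{m}{c}$ of hitting the specific subset. The only difference is that you justify the uniformity over size-$c$ subsets explicitly via the $c!\,(m-c)!/m!$ permutation count, where the paper simply asserts it; this is a harmless elaboration, not a different argument.
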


\begin{proof}
The probability that Algorithm~\ref{alg_bernoulli} samples 
indices
$i_1, \ldots, i_c$ is equal to  $\gamma^c(1-\gamma)^{m-c}$.  

We show that the same is true for Algorithm~\ref{alg_equiv}.  
The choice of the sampling distribution in 
Algorithm~\ref{alg_equiv} implies that it samples $\tilde{c}=c$
indices with
probability $\binom{m}{c}\>\gamma^c\>(1-\gamma)^{m-c}$.
Since there are $\binom{m}{c}$ ways to sample $c$ out of $m$ indices, 
the probbility that the particular index set $i_1, \ldots, i_c$ 
is picked, given that $c$
indices are being sampled is $1/\binom{m}{c}$.  Thus, the probability that
Algorithm~\ref{alg_equiv} picks indices $i_1, \ldots, i_c$ equals
$$\frac{1}{\binom{m}{c}}\>
\binom{m}{c}\>\gamma^c(1-\gamma)^{m-c} = \gamma^c(1-\gamma)^{m-c}.$$
\end{proof}

\subsection{Numerical experiments}\label{s_sampcomp}
We present two representative comparisons of the three sampling
strategies, with two plots for each strategy: The condition numbers of
full-rank sampled matrices $SQ$, and the failure percentage, that is
the percentage of sampled matrices $SQ$ that are numerically rank
deficient (as determined by the Matlab command \texttt{rank}).

The experiments are limited to very tall and skinny matrices
(with many more rows than columns, $m\gg n$), because
that's when the sampling strategies are most efficient. In particular,
since $c\geq n$ is required for $SQ$ to have full column rank,
sampling methods are inefficient when $n$ is not much smaller than
$m$, in which case a deterministic algorithm would be preferable.

\subsubsection*{Experimental setup}
The $m\times n$ matrices $Q$ with orthonormal columns have $m=10^4$ 
rows and $n=5$ columns. The condition numbers and failure percentages
are plotted against various sampling amounts $c$, with 30 runs for each $c$. 
For the failure percentages we display
only those sampling amounts $c$ that give rise to rank-deficient
matrices, in these particular 30 runs.
For Algorithm~\ref{alg_bernoulli} the horizontal
axis represents the numerator $c$ in the probability, that is, the expected 
number of sampled rows.
All three strategies sample from the same matrix.

We consider two different types of matrices:
Matrices with low coherence $\mu=1.5n/m$ in Figure~\ref{f_fig1}; and 
matrices with higher coherence $\mu=150n/m$ and many zero rows
in Figure~\ref{f_fig2}. Our numerical experiments indicate that these 
coherence values are representative, in the sense that different values 
of coherence would not produce any other interesting effects.

\begin{figure}  
\begin{center}
\resizebox{4.3in}{!}
{\includegraphics{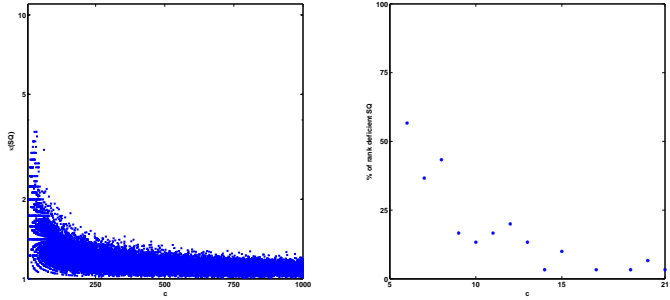}} \\
{\klein{(a)\ Algorithm~\ref{alg_without}: Sampling without replacement}}
\end{center}
\begin{center}
\resizebox{4.3in}{!}
{\includegraphics{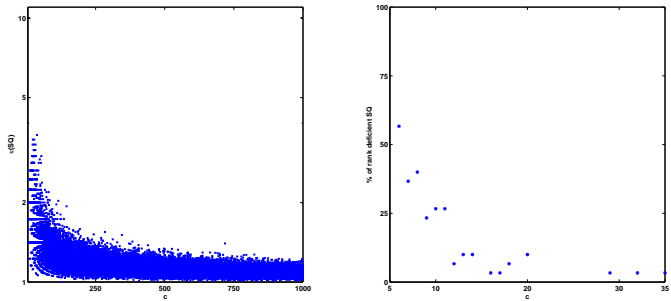}} \\
{\klein{(b)\ Algorithm~\ref{alg_with}: Sampling with replacement}}
\end{center}
\begin{center}
\resizebox{4.3in}{!}
{\includegraphics{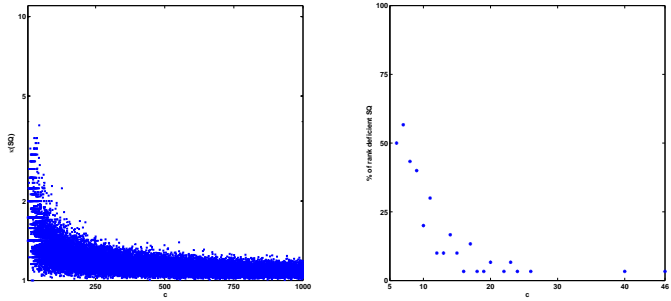}} \\
{\klein{(c)\ Algorithm~\ref{alg_bernoulli}: Bernoulli sampling}}
\end{center}
\caption{Condition numbers and percentage of rank-deficiency for matrices
with low coherence and small amounts of sampling. 
Here $Q$ is $m\times n$ with orthonormal 
columns, $m=10,000$, $n=5$, coherence $\mu=1.5n/m$, and generated with
Algorithm~\ref{alg_geno}.
Left panels: Horizontal coordinate axes represent amounts of sampling 
$n\leq c\leq 1,000$. Vertical coordinate axes represent
condition numbers $\kappa(SQ)$; the maximum is 10.
Right panels: Horizontal coordinate axes represent amounts of sampling that
give rise to numerically rank deficient matrices $SQ$.  
Vertical coordinate axes represent percentage of
numerically rank deficient matrices.
}\label{f_fig1}
\end{figure}

\subsubsection*{Figure~\ref{f_fig1}} Shown are condition numbers and 
percentage of rank deficient matrices for 
a matrix $Q$ with low coherence $\mu=1.5n/m$
generated by Algorithm~\ref{alg_geno}. At most 
10~percent of the rows are sampled.
The three strategies exhibit almost identical behavior:
The sampled matrices $SQ$ of full rank are very well
conditioned, with $\kappa(SQ)\leq 5$. Numerically 
rank-deficient matrices $SQ$ occur only for sampling amounts  $c\leq 47$.

\begin{figure}  
\begin{center}
\resizebox{4.3in}{!}
{\includegraphics{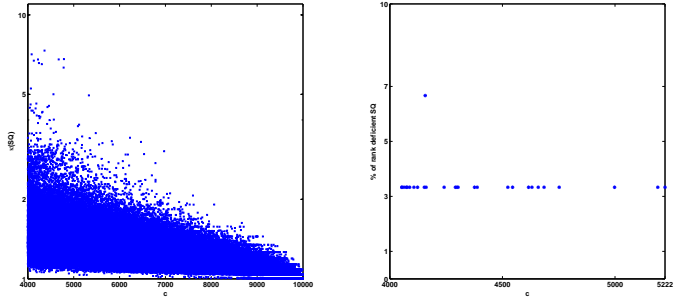}} \\
{\klein{(a)\ Algorithm~\ref{alg_without}: Sampling without replacement}}
\end{center}
\begin{center}
\resizebox{4.3in}{!}
{\includegraphics{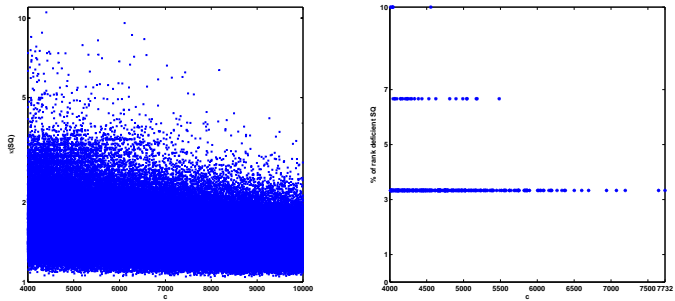}} \\
{\klein{(b)\ Algorithm~\ref{alg_with}: Sampling with replacement}}
\end{center}
\begin{center}
\resizebox{4.3in}{!}
{\includegraphics{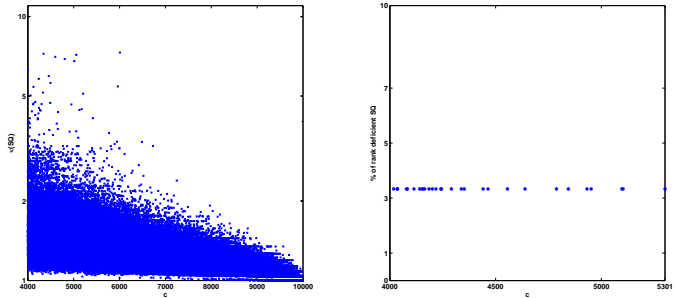}} \\
{\klein{(c)\ Algorithm~\ref{alg_bernoulli}: Bernoulli sampling}}
\end{center}
\caption{Condition numbers and percentage of rank-deficiency for matrices
with higher coherence and large amounts of sampling. 
Here $Q$ is $m\times n$ with orthonormal 
columns, $m=10,000$, $n=5$, coherence $\mu=150n/m$, and generated with
Algorithm~\ref{alg_zero}.
Left panels: Horizontal coordinate axes represent amounts of sampling 
$4,000\leq c\leq m$. Vertical coordinate axes represent
condition numbers $\kappa(SQ)$; the maximum is 10.
Right panels: Horizontal coordinate axes represent amounts of sampling that
give rise to numerically rank deficient matrices $SQ$.  
Vertical coordinate axes represent percentage of
numerically rank deficient matrices $SQ$; the maximum is 10 percent.
}\label{f_fig2}
\end{figure}

\subsubsection*{Figure~\ref{f_fig2}} 
Shown are condition numbers and percentage
of rank deficient matrices for 
a matrix $Q$, generated by Algorithm~\ref{alg_zero},
with coherence $150n/m$ and many zero rows.
The number of sampled rows ranges from $c=4000$ to $m$.
The sampled matrices $SQ$ of full rank are very well
conditioned, with $\kappa(SQ)\leq 10$.
Even for $c=4,000$, as many 10 percent of the sampled matrices can
still be rank-deficient.
All three algorithms have to sample
more than half of the rows of $Q$ in order to always produce 
matrices $SQ$ with full column rank. Specifically in these particular runs,
Algorithms \ref{alg_without} and~\ref{alg_bernoulli} need to sample
$c\geq 5,222$ and $c\geq 5,301$ rows, respectively, while
Algorithm~\ref{alg_without} needs $c\geq 7732$.

Note that the condition numbers of matrices from Algorithms
\ref{alg_without} and~\ref{alg_bernoulli} approach~1 as more and more
rows are sampled. This is because 
no row is sampled more than once; and for $c=m$ all rows are sampled.

Again, the three strategies exhibit almost identical behavior: The sampled
matrices $SQ$ of full rank are very well conditioned, with $\kappa(SQ)\leq 10$.
However, due to the higher coherence, numerically
rank-deficient matrices occur more frequently.

\subsection{Conclusions for Section~\ref{s_samp}}\label{s_sampconc}
The numerical experiments illustrate that the
three sampling strategies behave almost identically, in particular
for small to moderate
sampling amounts, and that sampled matrices of full rank tend to be
very well-conditioned\footnote{We have not been able to show rigorously
why the condition numbers tend to be less than~10.}. 
Furthermore, Section~\ref{s_relate} shows that Bernoulli
sampling can be viewed as a form of sampling without replacement,
and the numerical experiments confirm the similarity in behavior.

Among the three strategies, we recommend sampling with replacement
(Algorithm~\ref{s_with}) for small to moderate amounts of sampling
in Algorithm~\ref{alg_bpik}.
It is fast and easy to implement in both.

\section{Condition number bounds based on coherence}\label{s_cohbound}
We derive bounds for the condition numbers of matrices produced by the
sampling strategies in section~\ref{s_samp},
in terms of coherence. These bounds are based on a specific
concentration inequality and imply a, not necessarily tight, lower bound
for the number of sampled rows 
(Section~\ref{s_subcohbound}). Numerical experiments illustrate that
the bounds are informative (Section~\ref{s_cohex}). We end this 
section by summarizing the main features of the bounds 
(Section~\ref{s_cohconc}).

\subsection{Bounds}\label{s_subcohbound}
We show that the three sampling strategies in Section~\ref{s_samp}
all have the same condition number bound, in terms of coherence. 

Theorem~\ref{t_cohbound} below is based on a
matrix Chernoff concentration inequality (Section~\ref{s_chernoff}). 
We chose this particular inequality because extensive numerical experiments
with our Matlab toolbox \texttt{kappaSQ\_v3} \cite{impl}
suggest that it tends to produce the tightest bound.

\begin{theorem}\label{t_cohbound}
Let $Q$ be a real $m\times n$ matrix with $Q^TQ=I_n$ and coherence~$\mu$.
Let $S$ be a sampling matrix produced by 
Algorithms~\ref{alg_without}, \ref{alg_with}, or \ref{alg_bernoulli}
with $n\leq c\leq m$.
For $0<\epsilon<1$ and $f(x)\equiv e^x(1+x)^{-(1+x)}$ define
$$\delta\equiv n\left(f(-\epsilon)^{c/(m\mu)}+
f(\epsilon)^{c/(m\mu)}\right).$$
If $\delta<1$, then with probability at least $1-\delta$ we have
$\rank(SQ)=n$ and
$$\kappa(SQ)\leq \sqrt{\frac{1+\epsilon}{1-\epsilon}}.$$
\end{theorem}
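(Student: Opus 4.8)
The plan is to translate the condition-number bound into a two-sided eigenvalue estimate for the $n\times n$ Gram matrix $Q^TS^TSQ$, and then to obtain that estimate from the matrix Chernoff inequality of Section~\ref{s_chernoff}. First I would observe that the singular values of $SQ$ are the square roots of the eigenvalues of $Q^TS^TSQ$, so that a simultaneous containment $\lambda_j(Q^TS^TSQ)\in[1-\epsilon,1+\epsilon]$ for all $j$ forces $SQ$ to have full column rank (because $1-\epsilon>0$) and yields $\kappa(SQ)=\sigma_{\max}(SQ)/\sigma_{\min}(SQ)\leq\sqrt{(1+\epsilon)/(1-\epsilon)}$. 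Thus the entire theorem reduces to showing that the eigenvalues of $Q^TS^TSQ$ concentrate in $[1-\epsilon,1+\epsilon]$ with probability at least $1-\delta$.

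Next I would write $Q^TS^TSQ$ as a sum of random positive semidefinite matrices and identify the two parameters the Chernoff bound requires. Writing $q_j\equiv Q^Te_j$ for the $j$-th row of $Q$, each sampling method produces $Q^TS^TSQ=\sum_t X_t$ with rank-one summands of the form $X_t=\tfrac{m}{c}\,q_{k_t}q_{k_t}^T$ (with replacement), or $X_t=\tfrac{m}{c}\,\xi_t\,q_tq_t^T$ with independent Bernoulli indicators $\xi_t$ (Bernoulli sampling), or the without-replacement analogue. In every case $X_t\succeq0$ and $\lambda_{\max}(X_t)\leq\tfrac{m}{c}\max_j\|q_j\|_2^2=\tfrac{m\mu}{c}\equiv R$, while the unbiasedness property $\E[S^TS]=I_m$ (established for all three methods in Section~\ref{s_samp}) gives $\E[\sum_t X_t]=Q^TQ=I_n$, so the mean has both smallest and largest eigenvalue equal to~$1$.

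I would then feed these parameters into the matrix Chernoff inequality of Section~\ref{s_chernoff}: with $\mu_{\min}=\mu_{\max}=1$ and per-summand bound $R=m\mu/c$, the lower tail contributes $n\,f(-\epsilon)^{\mu_{\min}/R}=n\,f(-\epsilon)^{c/(m\mu)}$ and the upper tail contributes $n\,f(\epsilon)^{c/(m\mu)}$, recognizing the standard Chernoff functions $f(-\epsilon)=e^{-\epsilon}(1-\epsilon)^{-(1-\epsilon)}$ and $f(\epsilon)=e^{\epsilon}(1+\epsilon)^{-(1+\epsilon)}$. A union bound over the two tail events shows that the event $\lambda_{\min}(Q^TS^TSQ)<1-\epsilon$ or $\lambda_{\max}(Q^TS^TSQ)>1+\epsilon$ occurs with probability at most exactly this~$\delta$; the hypothesis $\delta<1$ guarantees the complementary good event has positive probability. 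Combining with the reduction of the first paragraph finishes the argument.

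The step I expect to be the main obstacle is the uniform treatment of all three sampling methods. For sampling with replacement and for Bernoulli sampling the summands $X_t$ are genuinely independent, so the classical matrix Chernoff bound applies verbatim. For sampling without replacement the indices are drawn dependently, and the usual independence-based Laplace-transform argument no longer applies directly; one must invoke the version of the matrix Chernoff inequality valid for sampling without replacement, which is precisely why Section~\ref{s_chernoff} states the concentration inequality in a form covering all three mechanisms with the identical tail function. Checking that this common statement indeed yields the same $R$, the same mean eigenvalues, and hence the same $\delta$ in the without-replacement case is the only delicate point; the remaining algebra (matching $f(\pm\epsilon)$ and the exponents $c/(m\mu)$) is routine.
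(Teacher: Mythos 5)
Your proposal is correct and follows essentially the same route as the paper: reduce $\kappa(SQ)$ to a two-sided eigenvalue bound on $Q^TS^TSQ$, decompose that Gram matrix into rank-one positive semidefinite summands with $\lambda_{\max}(X_t)\leq m\mu/c$ and mean $I_n$, apply the matrix Chernoff inequality with $\omega_{\min}=\omega_{\max}=1$, and finish with a union bound over the two tails. This is exactly the paper's argument for sampling with replacement and for Bernoulli sampling.

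The one point where your account diverges from the paper is the without-replacement case. You correctly flag that the summands are then dependent, but you resolve this by asserting that the concentration inequality of Section~\ref{s_chernoff} is ``stated in a form covering all three mechanisms.'' It is not: Theorem~\ref{t_tropp} explicitly assumes \emph{independent} summands, so it applies verbatim only to Algorithms~\ref{alg_with} and~\ref{alg_bernoulli}. The paper closes the without-replacement case by citing a separate result of Tropp (Lemma~3.4 of the reference \cite{tropp11b}), which establishes the same Chernoff-type tail for matrix sampling without replacement (essentially via the classical reduction from without-replacement to with-replacement sampling for the matrix Laplace transform). So your instinct about where the delicate point lies is right, but the stated Theorem~\ref{t_tropp} alone does not cover Algorithm~\ref{alg_without}; an additional external ingredient is needed there, and without naming it your argument for that one sampling method is incomplete.
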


\begin{proof} 
The proof is based on results from \cite{GT11,tropp11b,tropp11} and is 
relegated to Section~\ref{s_tcohproof}.
\end{proof}

Since $0<f(\pm\epsilon)<1$ for $0<\epsilon<1$,
Theorem~\ref{t_cohbound} implies that the sampling strategies
in Section~\ref{s_samp} are more likely to produce 
full-rank matrices as the number $c$ of sampled rows increases. Furthermore,
for a given total number of rows $m$, matrices $Q$ with fewer columns $n$
and lower coherence $\mu$ are more likely to give rise to 
sampled matrices $SQ$ that have full rank.

Theorem~\ref{t_cohbound} 
implies the following lower bound on the number of samples, but we make no
claims about the tightness of this bound.

\begin{corollary}\label{c_cohbound}
Under the assumptions of Theorem~\ref{c_cohbound},
$$c\geq 3m\mu\>\frac{\ln(2n/\delta)}{\epsilon^2}$$
samples are sufficient to achieve $\kappa(SQ)\leq \sqrt{\tfrac{1+\epsilon}{1-\epsilon}}$
with probability at least $1-\delta$.

\end{corollary}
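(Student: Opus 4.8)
The plan is to start from the explicit failure bound in Theorem~\ref{t_cohbound}, namely the quantity $n\bigl(f(-\epsilon)^{c/(m\mu)}+f(\epsilon)^{c/(m\mu)}\bigr)$ with $f(x)=e^x(1+x)^{-(1+x)}$, and to turn it into a clean sufficient condition on $c$ by replacing the awkward factors $f(\pm\epsilon)$ with a single Gaussian-type upper bound. Concretely, given the prescribed failure probability $\delta$, I would seek $c$ large enough that this failure bound does not exceed $\delta$. The route is to show that for $0<\epsilon<1$ both $f(\epsilon)\le e^{-\epsilon^2/3}$ and $f(-\epsilon)\le e^{-\epsilon^2/2}\le e^{-\epsilon^2/3}$, so that each of the two summands is at most $e^{-\epsilon^2 c/(3m\mu)}$. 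This yields $n\bigl(f(-\epsilon)^{c/(m\mu)}+f(\epsilon)^{c/(m\mu)}\bigr)\le 2n\,e^{-\epsilon^2 c/(3m\mu)}$, after which the corollary reduces to solving $2n\,e^{-\epsilon^2 c/(3m\mu)}\le\delta$ for $c$.

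The technical heart is the pair of scalar estimates. Writing $-\ln f(\epsilon)=(1+\epsilon)\ln(1+\epsilon)-\epsilon$, I would establish $-\ln f(\epsilon)\ge \epsilon^2/3$ on $[0,1]$ by analyzing $g(\epsilon)\equiv(1+\epsilon)\ln(1+\epsilon)-\epsilon-\epsilon^2/3$: one checks $g(0)=g'(0)=0$, computes $g''(\epsilon)=\tfrac{1}{1+\epsilon}-\tfrac23$, which is positive for $\epsilon<\tfrac12$ and negative thereafter, and verifies $g'(1)=\ln 2-\tfrac23>0$; since $g'$ rises from $0$, then decreases but stays positive at the endpoint, $g'\ge 0$ and hence $g\ge 0$ throughout $[0,1]$. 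The left tail is easier: with $-\ln f(-\epsilon)=\epsilon+(1-\epsilon)\ln(1-\epsilon)$ and $h(\epsilon)\equiv\epsilon+(1-\epsilon)\ln(1-\epsilon)-\epsilon^2/2$, one has $h(0)=h'(0)=0$ and $h''(\epsilon)=\tfrac{\epsilon}{1-\epsilon}\ge 0$, so $h\ge 0$ and $f(-\epsilon)\le e^{-\epsilon^2/2}$. These are the classical Chernoff tail estimates; the only care needed is confirming that it is the right tail that forces the constant $\tfrac13$ rather than $\tfrac12$, since that constant is exactly what pins down the leading factor $3$ in the final bound.

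With both estimates in hand, I would assemble $2n\,e^{-\epsilon^2 c/(3m\mu)}$ as an upper bound on the theorem's failure quantity, take logarithms, and rearrange: requiring $2n\,e^{-\epsilon^2 c/(3m\mu)}\le\delta$ is equivalent to $\tfrac{\epsilon^2 c}{3m\mu}\ge\ln(2n/\delta)$, i.e. $c\ge 3m\mu\,\ln(2n/\delta)/\epsilon^2$, which is precisely the stated bound. Because Theorem~\ref{t_cohbound} already guarantees full column rank and $\kappa(SQ)\le\sqrt{(1+\epsilon)/(1-\epsilon)}$ whenever its failure quantity is below $1$, it suffices that the chosen $c$ drives that quantity below $\delta$, which the displayed inequality accomplishes (and a fortiori keeps it below $1$ for $\delta<1$). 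I expect the only genuine obstacle to be the right-tail inequality $f(\epsilon)\le e^{-\epsilon^2/3}$: it is where the asymmetry of $f$ enters and where the constant $3$ is determined, and it needs the short second-derivative argument above rather than a one-line comparison.
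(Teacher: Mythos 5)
Your proposal is correct and follows essentially the same route as the paper: both reduce the corollary to the scalar inequality $-\ln f(\epsilon)\ge \epsilon^2/3$ on $(0,1)$, bound the theorem's failure quantity by $2n\,f(\epsilon)^{c/(m\mu)}\le 2n\,e^{-\epsilon^2 c/(3m\mu)}$, and solve for $c$. The only differences are cosmetic: the paper verifies the scalar inequality via the power series of $\ln(1+x)$ (obtaining $h(x)>\tfrac{3-x}{6}x^2\ge \tfrac{x^2}{3}$) and disposes of the left tail by the comparison $f(-x)\le f(x)$, whereas you use a second-derivative argument and bound the left tail directly with the constant $\tfrac12$; both are sound.
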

\begin{proof} See Section~\ref{s_ccohproof}.
\end{proof}

Corollary~\ref{c_cohbound} implies that the sampling strategies
in Section~\ref{s_samp}
should sample at least $c=\Omega\left(m\mu\ln{n}\right)$ rows to 
produce a full rank, well-conditioned matrix.
In particular, if $Q$ has minimal coherence $\mu=n/m$, then 
Corollary~\ref{c_cohbound} implies that the number of sampled rows
should be at least
\begin{eqnarray}\label{e_nmcoh}
c\geq 3n \> \frac{\ln(2n/\delta)}{\epsilon^2},
\end{eqnarray}
that is $c=\Omega\left(n\ln{n}\right)$.

To achieve $\kappa(SQ)\leq 10$ with probability at least .99
requires that the number of sampled rows be at least
\begin{eqnarray}\label{c_lb}
c\geq 3.2\,m\mu\>\left(\ln(2n)+4.7\right).
\end{eqnarray}
Here we chose $\epsilon_0=99/101$, so that the condition number bound
equals $\sqrt{\frac{1+\epsilon_0}{1-\epsilon_0}}=10$.

\begin{remark}\label{r_lowcoh}
Theorem~\ref{t_cohbound} is informative only for sufficiently low
coherence values.

For instance, consider the higher coherence
matrices from Figure~\ref{f_fig2} 
in Section~\ref{s_sampcomp} with $m=10,000$, $n=5$ and coherence
$\mu =150n/m$. Choose $\epsilon=99/101$ 
so that $\kappa(SQ)\leq 10$, 
and a failure probability $\delta = .01$. Then
Corollary~\ref{c_cohbound} implies the lower bound
$c\geq 12,408$, which means that the number of sampled rows would
have to be larger than the total number of rows.
\end{remark}

\subsection{Numerical experiments}\label{s_cohex}
We compare the bound for the condition numbers
of the sampled matrices (Theorem~\ref{t_cohbound}) with the true 
condition numbers of matrices produced by sampling with replacement 
(Algorithm~\ref{alg_with}). 

There are several reasons why
it suffices to consider only a single sampling strategy:
The three sampling methods all have
the same bound (Theorem~\ref{t_cohbound}); Bernoulli sampling
is a form of sampling without replacement (Section~\ref{s_relate});
and all three sampling methods exhibit very similar behavior
for matrices of low coherence
(Sections \ref{s_sampcomp} and~\ref{s_sampconc}).
Furthermore, this allows a clean comparison with the bounds
in Section~\ref{s_leverage} which apply only to Algorithm~\ref{alg_with}.

\subsubsection*{Experimental setup}
The $m\times n$ matrices $Q$ with orthonormal columns have $m=10^4$
rows and $n=5$ columns. 
The left panels in Figure~\ref{f_fig3} show the
condition numbers of the full-rank sampled matrices $SQ$ produced by 
Algorithm~\ref{alg_with}
against different sampling amounts $c$, with 30 runs for each $c$.
The right panels in Figure~\ref{f_fig3} show the percentage of rank
deficient matrices $SQ$ against different sampling amounts~$c$.  We
display only those sampling amounts $c$ that give rise to
rank-deficient matrices, in these particular 30 runs.

The left panels in Figure~\ref{f_fig3} also show
the condition number bound 
$\kappa_{\epsilon}\equiv \sqrt{\tfrac{1+\epsilon}{1-\epsilon}}$
from Theorem~\ref{t_cohbound}. For each value of $c$, 
we obtain $\epsilon$ as
the solution of the nonlinear equation $F_c(x)^2=0$ associated with
Theorem~\ref{t_cohbound} and defined as 
$$F_c(x)\equiv \delta -
n\left(f(-x)^{c/(m\mu)}+ f(x)^{c/(m\mu)}\right).$$
We impose the stringent requirement
of $\delta = .01$, corresponding to a 99 percent success probability.
Since an explicit expression seems out of reach, we use unconstrained 
nonlinear optimization (a Nelder-Mead simplex direct search) to solve
$F_c(x)^2=0$. This is done in Matlab with a code equivalent to 
$$\epsilon=\left|\texttt{fminsearch}(F_c(x)^2,0, 10^{-30})\right|,$$
where \texttt{fminsearch} starts at the point~0, and  
terminates when $|F_c(\epsilon)|^2\leq 10^{-30}$.
If $0<\epsilon<1$  then 
$\kappa_{\epsilon}$ is plotted, otherwise nothing is plotted.

As explained in Remark~\ref{r_lowcoh}, 
Theorem~\ref{t_cohbound} is not informative for higher coherence 
values, so we consider matrices with the following properties:
Minimal coherence $\mu=n/m$ in
Figure~\ref{f_fig3}(a); low coherence $\mu=1.5n/m$ in
Figure~\ref{f_fig3}(b);  slightly higher coherence $\mu=15n/m$ 
with many zero rows in Figure~\ref{f_fig3}(c). 
The matrices for Figures~\ref{f_fig3}(a) 
and~\ref{f_fig3}(b) were generated with Algorithm~\ref{alg_geno}, 
while the matrix for Figure~\ref{f_fig3}(c) was generated 
with Algorithm~\ref{alg_zero}.

\subsubsection*{Figure~\ref{f_fig3}}
The left panels illustrate that Theorem~\ref{t_cohbound},
constrained to a 99 percent success probability, correctly
predicts the magnitude of the condition numbers, i.e.  $\kappa(SQ)\leq
10$.  Hence Theorem~\ref{t_cohbound} provides informative qualitative
bounds for matrices with very low coherence, as well as for matrices
with slightly higher coherence and many zero rows.

\begin{figure}  
\begin{center}
\resizebox{4in}{!}
{\includegraphics{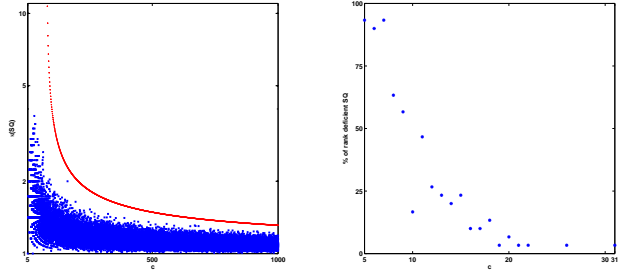}}\\
{\klein{(a)\ $Q$ has minimal coherence $\mu = n/m$. Sampling amounts
are $n\leq c\leq 1,000$.}}
\end{center}
\begin{center}
\resizebox{4in}{!}
{\includegraphics{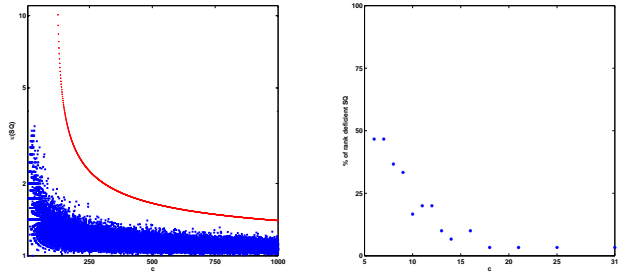}}\\
{\klein{(b)\ $Q$ has low coherence $\mu = 1.5 n/m$. Sampling amounts
are $n\leq c\leq 1,000$.}}
\end{center}
\begin{center}
\resizebox{4in}{!}
{\includegraphics{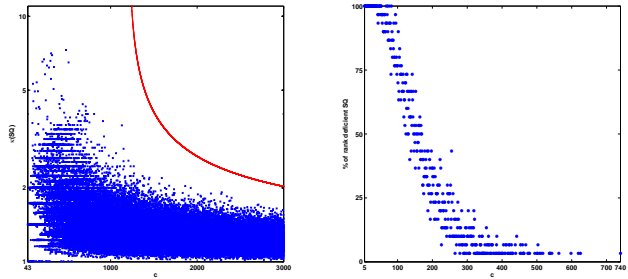}}\\
{\klein{(c)\ $Q$ has slightly higher coherence $\mu = 15n/m$ and many zero 
rows. Sampling sampling amounts are $n\leq c\leq 3,000$.}}
\end{center}
\caption{Condition numbers and bound from Theorem~\ref{t_cohbound},
and percentage of rank-deficiency.
Here $Q$ is $m\times n$ with orthonormal 
columns, $m=10,000$ and $n=5$.
Left panels: The horizontal coordinate axes represent amounts of sampling~$c$. 
The vertical coordinate axes represent
condition numbers $\kappa(SQ)$; the maximum is 10.
The dots at the bottom represent the
condition numbers of matrices sampled with Algorithm~\ref{alg_with}, 
while the upper line
represents the bound from Theorem~\ref{t_cohbound}.
Right panels: The horizontal coordinate axes represent amounts of sampling that
produce numerically rank deficient matrices $SQ$.  
The vertical coordinate axes represent the percentage of
numerically rank deficient matrices~$SQ$.
}\label{f_fig3}
\end{figure}

\subsubsection*{Table~\ref{t_tab1}}
This is a comparison of the numerical experiments in Figure~\ref{f_fig3}
with the bounds from Theorem~\ref{t_cohbound} and 
Corollary~\ref{c_cohbound}, both restricted to a 99 percent success
probability.

The third column depicts the highest values of $c$ for which
a rank-deficient matrix occurs, during these particular 30 runs. 
It should be kept in mind that these values are highly dependent on
the particular sampling runs. This column is to be compared 
to the fourth column which contains the lowest values of $c$ where
Theorem~\ref{t_cohbound} starts to apply.
Although there is a gap between the occurrence of the last rank deficiency
and the onset of Theorem~\ref{t_cohbound}, the values have qualitatively
the same order of magnitude.

The rightmost column in Table~\ref{t_tab1} contains the values of the
lower bound (\ref{c_lb}), and is to be compared to the column 
with the starting values for Theorem~\ref{t_cohbound}.  Although
(\ref{c_lb}) is weaker than Theorem~\ref{t_cohbound}, its values are
close to the starting values of Theorem~\ref{t_cohbound},
especially for lower coherence.
Hence, the lower bound (\ref{c_lb}) captures
the correct magnitude of the sampling amounts where Theorem starts to
become informative.

Table~\ref{t_tab1} illustrates that, although Theorem~\ref{t_cohbound}
and Corollary~\ref{c_cohbound} tend to become more pessimistic with
increasing coherence, they still provide qualitative information for
matrices with low coherence -- even when restricted to a 99 percent
success probability.

\begin{table}
\begin{center}
\begin{tabular}{|l||c|c|c|c|}
\hline
Figure& coherence $\mu$ & last rank deficiency & Theorem~\ref{t_cohbound} 
& (\ref{c_lb})\\
& & occurs at $c=$ & starts at $c=$ & \\
\hline\hline
\ref{f_fig3}(a) & $n/m$ & 31 & 81 & 83 \\
\ref{f_fig3}(b) & $1.5\>n/m$ & 31 & 121 & 125\\
\ref{f_fig3}(c) & $15\>n/m$ & 740 & 1207 & 1241\\
\hline
\end{tabular}
\end{center}
\bigskip
\caption{Comparison of information from Figure~\ref{f_fig3},
with Theorem~\ref{t_cohbound} and 
Corollary~\ref{c_cohbound}.}\label{t_tab1}
\end{table}

\subsection{Conclusions for Section~\ref{s_cohbound}}\label{s_cohconc}
The bounds in Theorem~\ref{t_cohbound} and Corollary~\ref{c_cohbound}
have the following advantages:

\begin{enumerate}
\item They are non-asymptotic bounds, where all constants have explicit
numerical values, hence they are tighter than 
the bounds in \cite[Theorem 3.2]{AMTol10}.

\item They apply to three different sampling methods.
\item They imply a lower bound, of $\Omega\left(m\mu\ln{n}\right)$,
on the required number of sampled rows. Although we did not give a formal
proof of tightness, 
numerical experiments illustrate that sampling only the required number
of rows implied by the bound is realistic.
numerical experiments illustrate that the bound is realistic.

\item Even under the stringent requirement of a 99 percent success 
probability, 
they are informative for matrices of small dimension because they
correctly predict the magnitude of the condition
numbers for the sampled matrices.
\end{enumerate}

Note that the bounds in Theorem~\ref{t_cohbound} and
Corollary~\ref{c_cohbound} are informative only for matrices that are
tall and skinny ($m\gg n$) and have low coherence. 
The restriction to tall and skinny matrices is not an imposition,
because it is required for 
the effectiveness of the sampling strategies, see Section~\ref{s_sampcomp}.

In the next section we try to relax the restriction to low coherence
matrices, by more thoroughly exploiting the information available from
the row norms of $Q$.

\section{Condition number bounds based on leverage scores, for uniform sampling 
with replacement}\label{s_leverage} 

The goal is to tighten Theorem~\ref{t_cohbound} by making use of all
the row norms of $Q$, instead of just the largest one.
To this end we introduce leverage scores
(Section~\ref{s_levscore}), which are the squared row norms of $Q$.
We use them to derive a bound for uniform sampling with
replacement (Section~\ref{s_levbound}), and for
more easily computable versions of the bound (Section~\ref{s_clevbound}).
Analytical
(Section~\ref{s_ancomp}) and experimental (Section~\ref{s_numcomp})
comparisons demonstrate that the implied lower bound on the number of
sampled rows is better than the coherence-based bounds in
Section~\ref{s_cohbound}. A review with some reflection ends this
section (Section~\ref{s_lqconc}).

\subsection{Leverage scores}\label{s_levscore}
So-called \textit{statistical leverage scores} were first introduced
in 1978 by Hoaglin and Welsch \cite{HoagW78} to detect outliers when
computing regression diagnostics, see also
\cite{ChatterH86,VelleW81}. Mahoney and Drineas pioneered the use
of leverage scores for importance sampling strategies in randomized
matrix computations \cite{Mah11}.

Specifically, 
if $M$ is a real $m\times n$ matrix with $\rank(M)=n$, then the 
$m\times m$  \textit{hat matrix} 
$$H\equiv M(M^TM)^{-1}M^T$$ is the orthogonal
projector onto the column space of $M$, and its diagonal elements 
are called \textit{leverage scores} \cite[Section~2]{HoagW78}.
Hence, leverage scores are basis-independent. For our
purposes, though, it suffices to define them in terms of a thin QR
decomposition $M=QR$, so that the hat matrix can be expressed
as $H=QQ^T$. 

\begin{definition}
If $Q$ is a $m\times n$ matrix with $Q^TQ=I_n$, then its
{\rm leverage scores} are
$$\ell_j\equiv \|e_j^TQ\|_2^2, \qquad 1\leq j\leq m.$$
The $m\times m$ diagonal matrix of leverage scores is
$$L\equiv\diag(\ell_1,\ldots,\ell_m).$$
\end{definition}

Note that the coherence is the largest leverage score, 
$$\mu=\max_{1\leq j\leq m}{\ell_j}=\|L\|_2.$$ 

\subsection{Bounds}\label{s_levbound}
The bound in Theorem~\ref{t_levbound} below involves leverage scores and
is based on a matrix Bernstein 
concentration inequality (Section~\ref{s_bernstein}), rather than on 
the matrix Chernoff concentration inequality (Section~\ref{s_chernoff})
for Theorem~\ref{t_cohbound}. Although the Bernstein inequality may not always
be as tight, we did not see how to insert leverage scores into the 
Chernoff inequality.

\begin{theorem}\label{t_levbound}
Let $Q$ be a $m\times n$ real matrix with $Q^TQ=I_n$, leverage scores
$\ell_j$, $1\leq j\leq m$, and coherence $\mu$. Let $S$ be a sampling
matrix produced by Algorithm~\ref{alg_with} with
$n\leq c\leq m$. For  $0<\epsilon<1$ set
$$\delta \equiv 2n\exp\left(-\tfrac{3}{2}\>
\frac{c \epsilon^2}{m\>(3\|Q^TLQ\|_2+\epsilon\mu)}\right).$$
If $\delta<1$, then with probability at least $1-\delta$ we have
$\rank(SQ)=n$ and 
$$\kappa(SQ) \leq \sqrt{\frac{1+\epsilon}{1-\epsilon}}.$$
\end{theorem}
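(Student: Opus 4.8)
The plan is to reduce the condition-number bound to a spectral deviation estimate for the Gram matrix $(SQ)^T(SQ)$, and then apply a two-sided matrix Bernstein inequality (Section~\ref{s_bernstein}, see \cite{tropp11,tropp11b,GT11}) to that deviation. First I would record the reduction: if $\|(SQ)^T(SQ)-I_n\|_2<\epsilon$, then every eigenvalue of $(SQ)^T(SQ)$ lies in $(1-\epsilon,1+\epsilon)$, so every singular value of $SQ$ lies in $(\sqrt{1-\epsilon},\sqrt{1+\epsilon})$. Since $1-\epsilon>0$, this forces $\rank(SQ)=n$ and $\kappa(SQ)\le\sqrt{(1+\epsilon)/(1-\epsilon)}$. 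Hence it suffices to show that the event $\|(SQ)^T(SQ)-I_n\|_2\ge\epsilon$ has probability at most $\delta$.

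Next I would express the Gram matrix as a sum of independent rank-one terms. Writing $q_j\equiv e_j^TQ$ for the rows of $Q$ and recalling that Algorithm~\ref{alg_with} draws $k_1,\dots,k_c$ i.i.d.\ uniformly with $S=\sqrt{m/c}\,(e_{k_1}\ \cdots\ e_{k_c})^T$, one gets $(SQ)^T(SQ)=\tfrac{m}{c}\sum_{t=1}^c q_{k_t}^Tq_{k_t}$. Because $\E[q_{k_t}^Tq_{k_t}]=\tfrac1m\sum_j q_j^Tq_j=\tfrac1m Q^TQ=\tfrac1m I_n$, the centered summands $X_t\equiv\tfrac{m}{c}q_{k_t}^Tq_{k_t}-\tfrac1c I_n$ are i.i.d., Hermitian, zero-mean, and satisfy $\sum_t X_t=(SQ)^T(SQ)-I_n$. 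The Bernstein inequality then needs two inputs: a uniform bound $R$ on $\|X_t\|_2$, and the matrix variance $\sigma^2\equiv\|\sum_t\E[X_t^2]\|_2$.

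For $R$, the eigenvalues of $X_t$ are $\tfrac{m}{c}\ell_{k_t}-\tfrac1c$ (once) and $-\tfrac1c$ ($n-1$ times); using $\ell_{k_t}\le\mu$ and $m\mu\ge n\ge1$ gives $\|X_t\|_2\le m\mu/c=:R$. The variance is the crux of the argument, and is where the leverage-score matrix $Q^TLQ$ emerges. Using the rank-one identity $(q_{k_t}^Tq_{k_t})^2=\ell_{k_t}\,q_{k_t}^Tq_{k_t}$ together with the \emph{key computation} $\E[\ell_{k_t}\,q_{k_t}^Tq_{k_t}]=\tfrac1m\sum_j\ell_j\,Q^Te_je_j^TQ=\tfrac1m Q^TLQ$, I would obtain the exact identity $\sum_t\E[X_t^2]=\tfrac{m}{c}Q^TLQ-\tfrac1c I_n$. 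Its norm I would then bound by $\sigma^2\le\tfrac{m}{c}\|Q^TLQ\|_2$; this requires the auxiliary fact $m\|Q^TLQ\|_2\ge1$, which follows because $\operatorname{tr}(Q^TLQ)=\sum_j\ell_j^2\ge(\sum_j\ell_j)^2/m=n^2/m$ forces $\|Q^TLQ\|_2\ge\operatorname{tr}(Q^TLQ)/n\ge n/m$.

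Finally, substituting $R=m\mu/c$ and $\sigma^2\le\tfrac{m}{c}\|Q^TLQ\|_2$ into the two-sided matrix Bernstein bound $\Prob[\|\sum_t X_t\|_2\ge\epsilon]\le 2n\exp\bigl(\tfrac{-\epsilon^2/2}{\sigma^2+R\epsilon/3}\bigr)$ and simplifying the denominator as $\sigma^2+R\epsilon/3=\tfrac{m}{3c}(3\|Q^TLQ\|_2+\epsilon\mu)$ produces exactly the stated $\delta$; combined with the reduction of the first paragraph, this closes the argument. The main obstacle is the variance step: obtaining the clean closed form $\sum_t\E[X_t^2]=\tfrac{m}{c}Q^TLQ-\tfrac1c I_n$ and bounding its norm sharply (rather than by a lossy triangle inequality) is precisely what makes the exponent come out with the tight factor $3\|Q^TLQ\|_2+\epsilon\mu$ instead of something weaker.
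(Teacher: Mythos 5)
Your proposal is correct and follows essentially the same route as the paper's proof in Section~\ref{s_tlevproof}: center the rank-one summands $Y_t=\tfrac{m}{c}Q^Te_{k_t}e_{k_t}^TQ$, bound $\|X_t\|_2\le m\mu/c$, compute $\E[X_t^2]=\tfrac{1}{c^2}(m\,Q^TLQ-I_n)$ via $Y_t^2=\ell_{k_t}\tfrac{m}{c}Y_t$, and apply the matrix Bernstein inequality of Section~\ref{s_bernstein}. You even supply the small justification that $m\|Q^TLQ\|_2\ge 1$ (via $\operatorname{tr}(Q^TLQ)=\sum_j\ell_j^2\ge n^2/m$), which the paper leaves implicit in the step ``positive semi-definiteness gives \dots''.
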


\begin{proof} The proof uses results from  \cite{BalzRN11,Recht11}
and is relegated to Section~\ref{s_tlevproof}.
\end{proof}

Like Theorem~\ref{t_cohbound}, Theorem~\ref{t_levbound} implies that
sampling with replacement is more likely to produce full-rank matrices
as the number $c$ of sampled rows increases. Furthermore, for a given
total number of rows $m$, matrices $Q$ with fewer columns $n$ and
lower coherence $\mu$ are more likely to yield sampled matrices
$SQ$ that have full rank. The 
dependence of $\|Q^TLQ\|_2$ on $\mu$ is discussed below.

\begin{remark} \label{r_lq}
The norm $\|Q^TLQ\|_2$ has simple and tight bounds in terms of the coherence,
\begin{eqnarray}\label{e_mu}
\mu^2\leq \|Q^TLQ\|_2\leq  \mu.
\end{eqnarray}

The lower bound follows from $\|Q^TLQ\|_2=\|L^{1/2}Q\|_2^2$ and 
$$\|L^{1/2}Q\|_2\geq \|e_j^TL^{1/2}Q\|_2=\ell_j^{1/2}\>\|e_j^TQ\|_2=\ell_j,
\qquad 1\leq j\leq m,$$
which implies  $\|L^{1/2}Q\|_2\geq\mu$.
\smallskip

The bounds (\ref{e_mu}) are attained for extreme values of the coherence:
\begin{itemize}
\item In case of minimal coherence $\mu=\ell_j$ for all $1\leq j\leq m$, 
we have
$L=\mu I_m$. Thus $\|Q^TLQ\|_2=\mu\|Q^TQ\|_2=\mu$, and the upper bound is 
attained.
\item In case of maximal coherence $\mu=1$, we have $\mu^2=\mu$.
Thus $\|Q^TLQ\|_2=\mu^2=\mu$, and both, lower and upper bounds
are attained.
\end{itemize}
\end{remark}

\subsection{Computable bounds}\label{s_clevbound}
We present easily computable bounds for $\|Q^TLQ\|_2$, based 
on coherence and several of the largest leverage scores.

To this end, we use a labeling of the leverage scores in non-increasing order,
$$\mu=\ell_{[1]}\geq \cdots\geq \ell_{[m]}.$$ 

\begin{corollary}\label{c_1}
Under the assumptions of Theorem~\ref{t_levbound}, if
$t\equiv\left\lfloor 1/\mu\right\rfloor$, then
$$\|Q^TLQ\|_2\leq \mu\>\sum_{j=1}^t{\ell_{[j]}}+(1-t\,\mu)\,\ell_{[t+1]}
\leq \mu.$$
If, in addition, $t$ is an integer, then
$\|Q^TLQ\|_2\leq \mu\>\sum_{j=1}^t{\ell_{[j]}}$.
\end{corollary}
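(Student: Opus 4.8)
The plan is to obtain this corollary as a direct specialization of the general diagonal-scaling bound in Theorem~\ref{t_2}. The starting point is the identity $\|Q^TLQ\|_2=\|L^{1/2}Q\|_2^2$ (already used in Remark~\ref{r_lq}): the quantity to be bounded is the squared two-norm of a diagonally scaled matrix, which is precisely the object Theorem~\ref{t_2} controls. I would therefore invoke Theorem~\ref{t_2} with $Z=Q$ and $D=L^{1/2}$, so that $DZ=L^{1/2}Q$ and hence $\|DZ\|_2^2=\|Q^TLQ\|_2$.

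With these choices the ingredients of Theorem~\ref{t_2} simplify, because $Q$ has orthonormal columns and hence all singular values equal to $1$. Thus $\|Z\|_2=\|Z^{\dagger}\|_2=1$, the largest squared row norm is $\mu_z=\mu$ (the coherence), the diagonal entries are $d_j=\ell_j^{1/2}$ so that $d_{[j]}^2=\ell_{[j]}$, and the index becomes $t=\lfloor(\|Z^{\dagger}\|_2^2\,\mu_z)^{-1}\rfloor=\lfloor 1/\mu\rfloor$, matching the definition in the statement. Substituting, the right-hand side $\mu_z\sum_{j=1}^t d_{[j]}^2+(\|Z\|_2^2-t\mu_z)\,d_{[t+1]}^2$ turns into exactly $\mu\sum_{j=1}^t\ell_{[j]}+(1-t\mu)\,\ell_{[t+1]}$.

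Before concluding I must check which of the two branches of Theorem~\ref{t_2} applies, i.e. whether $\|Z\|_2^2-t\mu_z\le\mu_z$, which here reads $1-t\mu\le\mu$. This is where the floor does the work: from $t=\lfloor 1/\mu\rfloor$ we have $t\le 1/\mu<t+1$, hence $(t+1)\mu>1$, which rearranges to $1-t\mu<\mu$; the same inequalities give $t\mu\le 1$, so the coefficient $1-t\mu$ is nonnegative. Therefore the first branch always applies, and Theorem~\ref{t_2} yields the first inequality of the corollary.

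For the second inequality I would bound every leverage score by the largest, $\ell_{[j]}\le\ell_{[1]}=\mu$, giving $\sum_{j=1}^t\ell_{[j]}\le t\mu$ and, using $1-t\mu\ge 0$, also $(1-t\mu)\ell_{[t+1]}\le(1-t\mu)\mu$; adding these yields $t\mu^2+(1-t\mu)\mu=\mu$. Finally, in the degenerate case $t\mu=1$ (equivalently, $1/\mu$ is an integer) the middle term $(1-t\mu)\ell_{[t+1]}$ vanishes, which gives the last assertion $\|Q^TLQ\|_2\le\mu\sum_{j=1}^t\ell_{[j]}$. The only genuine content here is Theorem~\ref{t_2} itself; granting it, the corollary is routine, and the sole place needing care is the branch check $1-t\mu\le\mu$, which I expect to be the main (and quite mild) obstacle.
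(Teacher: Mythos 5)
Your proposal is correct and follows essentially the same route as the paper: the paper proves Corollary~\ref{c_1} by applying Corollary~\ref{c_t2} (which is exactly Theorem~\ref{t_2} specialized to $Z^TZ=I_n$, including the same branch check $1-t\mu\le\mu$ via the floor) with $D=L^{1/2}$, $Z=Q$, then bounds $\ell_{[j]}\le\mu$ for the second inequality and notes $1-t\mu=0$ when $1/\mu$ is an integer. Your direct invocation of Theorem~\ref{t_2} merely inlines that intermediate corollary, so there is no substantive difference.
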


\begin{proof}
See Section~\ref{s_lqproof}.
\end{proof}

The number of large leverage scores appearing in Corollary~\ref{c_1} depends on 
the coherence: Few leverage scores for high coherence, but more
for low coherence. Henceforth we will use the approximation
from Corollary~\ref{c_1} instead of the true value $\|Q^TLQ\|_2$, for two
reasons: First, numerical experiments show that the 
approximation tends to be very accurate. Second, the
approximation is convenient, because it requires 
only a leverage score distribution rather than a full-fledged matrix~$Q$.

\begin{remark}
Corollary~\ref{c_1} is tight for the extreme cases of minimal and maximal
coherence.

\begin{itemize}
\item In case of minimal coherence $\mu=\ell_j$ for all
$1\leq j\leq m$,  Remark~\ref{r_lq} implies $\|Q^TLQ\|_2=\mu$.
The bound in Corollary~\ref{c_1} is $\|Q^TLQ\|_2\leq \mu$, thus tight.

\item In case of maximal coherence $\mu=1$, Remark~\ref{r_lq} implies
$\|Q^TLQ\|_2= \mu^2=\mu$. 
Corollary~\ref{c_1} holds with $t=1$ and gives the
bound $\|Q^TLQ\|_2\leq \mu$, which is tight as well.
\end{itemize}
\end{remark}

Inserting this approximation for $\|Q^TLQ\|_2$ into the expression
for $\delta$ in Theorem~\ref{t_levbound} yields a, not necessarily tight,
lower bound on the number of samples. 

\begin{corollary}\label{c_levbound}
Under the assumptions of Theorem~\ref{t_levbound},
$$c\geq \tfrac{2}{3}m\>(3\tau+\epsilon\mu)\>
\frac{\ln(2n/\delta)}{\epsilon^2},$$
where $\tau\equiv \mu\>\sum_{j=1}^t{\ell_{[j]}}+(1-t\,\mu)\,\ell_{[t+1]}$, samples are sufficient to achieve $\kappa(SQ)\leq \sqrt{\tfrac{1+\epsilon}{1-\epsilon}}$
with probability at least $1-\delta$.
\end{corollary}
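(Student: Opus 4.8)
The plan is to invert the exponential failure-probability expression in Theorem~\ref{t_levbound} to obtain an explicit sufficient condition on $c$, and then to replace the exact quantity $\|Q^TLQ\|_2$ by its computable upper bound from Corollary~\ref{c_1}. First I would treat $\delta$ as a prescribed target failure probability and ask when the quantity
$$2n\exp\left(-\tfrac{3}{2}\>\frac{c\epsilon^2}{m\>(3\|Q^TLQ\|_2+\epsilon\mu)}\right)$$
defined in Theorem~\ref{t_levbound} is at most $\delta$. Taking logarithms of both sides and rearranging, this holds precisely when
$$c\geq \tfrac{2}{3}\>m\>(3\|Q^TLQ\|_2+\epsilon\mu)\>\frac{\ln(2n/\delta)}{\epsilon^2}.$$
At this stage the condition is an exact equivalence involving the true norm $\|Q^TLQ\|_2$.

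The second step uses Corollary~\ref{c_1}, which supplies the upper bound $\|Q^TLQ\|_2\leq \tau$ with $\tau\equiv \mu\sum_{j=1}^t{\ell_{[j]}}+(1-t\,\mu)\,\ell_{[t+1]}$ and $t\equiv\left\lfloor 1/\mu\right\rfloor$. The key observation is that the right-hand side of the displayed lower bound on $c$ is monotonically increasing in $\|Q^TLQ\|_2$; replacing $\|Q^TLQ\|_2$ by the larger quantity $\tau$ therefore only strengthens the requirement on $c$. Consequently, any $c$ satisfying
$$c\geq \tfrac{2}{3}\>m\>(3\tau+\epsilon\mu)\>\frac{\ln(2n/\delta)}{\epsilon^2}$$
automatically satisfies the exact condition derived in the first step, which in turn forces the failure probability in Theorem~\ref{t_levbound} to be at most $\delta$. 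Invoking Theorem~\ref{t_levbound} then yields $\rank(SQ)=n$ and $\kappa(SQ)\leq\sqrt{\tfrac{1+\epsilon}{1-\epsilon}}$ with probability at least $1-\delta$, as claimed.

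There is no substantive obstacle here: the argument is a routine algebraic inversion of a monotone exponential, combined with the already-established bound of Corollary~\ref{c_1}. The only point requiring care is the direction of the inequality — one must verify that passing from the true $\|Q^TLQ\|_2$ to its upper bound $\tau$ inflates the right-hand side, so that the resulting condition on $c$ remains sufficient rather than merely necessary. I would also note in passing that the intermediate bound involving $\|Q^TLQ\|_2$ is itself the tightest condition this approach produces, with the $\tau$-version traded for computability.
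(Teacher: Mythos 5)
Your argument is correct and matches the paper's (largely implicit) derivation exactly: solve $2n\exp\bigl(-\tfrac{3}{2}\,c\epsilon^2/(m(3\|Q^TLQ\|_2+\epsilon\mu))\bigr)\leq\delta$ for $c$, then enlarge $\|Q^TLQ\|_2$ to the computable bound $\tau$ from Corollary~\ref{c_1}, noting the monotonicity that keeps the condition sufficient. No gaps.
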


In particular, if $Q$ has minimal coherence $\mu=n/m$, then 
Corollary~\ref{c_levbound} implies that the number of sampled rows
should be at least
$$c\geq 3n \> \frac{\ln(2n/\delta)}{\epsilon^2}.$$
This is the same as the coherence-based lower bound (\ref{e_nmcoh}).

To achieve $\kappa(SQ)\leq 10$ with probability at least .99
requires that the number of sampled rows be at least
\begin{eqnarray}\label{c_lbl}
c\geq m\>(2.1\tau+.7\mu)\>\left(\ln(2n)+4.7\right).
\end{eqnarray}

\subsection{Analytical comparison of the bounds in Sections \ref{s_subcohbound}
and~\ref{s_levbound}}\label{s_ancomp}
An analytical comparison between Theorems \ref{t_cohbound} 
and~\ref{t_levbound} is not obvious, because they are based on different
concentration inequalities. Instead we compare the implied lower 
bounds for the number of sampled rows, and show that the
leverage-score based bound in Corollary~\ref{c_levbound}
is at least as tight as the coherence-based bound in Corollary~\ref{c_cohbound}.

\begin{corollary}\label{c_comp}
Under the assumptions of Theorem~\ref{t_levbound} and Corollary~\ref{c_1},
$$\tfrac{2}{3}m\>(3\tau+\epsilon\mu)\>
\frac{\ln(2n/\delta)}{\epsilon^2}\leq 
3m\mu\>\frac{\ln(2n/\delta)}{\epsilon^2}.$$
Hence Corollary~\ref{c_levbound} is at least as tight as
Corollary~\ref{c_cohbound}.
\end{corollary}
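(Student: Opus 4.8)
The plan is to strip away the common structure on both sides and reduce the claim to an elementary scalar inequality. Both sides of the asserted inequality carry the factor $m\,\ln(2n/\delta)/\epsilon^2$; since the assumptions inherited from Theorem~\ref{t_levbound} give $0<\delta<1$ (so that $\ln(2n/\delta)>0$) and $0<\epsilon<1$ (so that the denominator is positive), this factor is strictly positive and I may divide through by it without reversing the inequality. What then remains to be shown is
\begin{equation*}
\tfrac{2}{3}\,(3\tau+\epsilon\mu)\leq 3\mu,
\end{equation*}
which, after expanding the left-hand side, is equivalent to $2\tau+\tfrac{2}{3}\epsilon\mu\leq 3\mu$.

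The key ingredient is the upper bound $\tau\leq\mu$ supplied by Corollary~\ref{c_1}: the quantity $\tau$ in Corollary~\ref{c_levbound} is exactly the computable majorant of $\|Q^TLQ\|_2$ appearing there, and that corollary asserts it is bounded above by $\mu$. Using $\tau\leq\mu$ to control the first term and $0<\epsilon<1$ to control the second, I would estimate
\begin{equation*}
2\tau+\tfrac{2}{3}\epsilon\mu\leq 2\mu+\tfrac{2}{3}\mu=\tfrac{8}{3}\mu\leq 3\mu,
\end{equation*}
which is the desired inequality. Reinstating the common positive factor then yields the displayed statement of the corollary verbatim. The concluding sentence — that Corollary~\ref{c_levbound} is at least as tight as Corollary~\ref{c_cohbound} — is then immediate, since the left-hand side of the proved inequality is precisely the lower bound of Corollary~\ref{c_levbound} and the right-hand side is precisely that of Corollary~\ref{c_cohbound}.

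I expect no genuine obstacle here: the whole argument rests on the single inequality $\tau\leq\mu$ from Corollary~\ref{c_1} together with $\epsilon<1$. The one point deserving a moment of care is verifying that the shared prefactor is strictly positive, so that division preserves the direction of the inequality; this is guaranteed by the standing hypotheses $0<\delta<1$ and $0<\epsilon<1$.
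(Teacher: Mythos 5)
Your proposal is correct and follows essentially the same route as the paper: factor out the common positive term $m\ln(2n/\delta)/\epsilon^2$ and reduce to the scalar inequality $2\tau+\tfrac{2}{3}\epsilon\mu\leq 3\mu$, which follows from $\tau\leq\mu$ (Corollary~\ref{c_1}) and $\epsilon<1$. Your version is slightly more explicit about why the common factor is positive and about the intermediate bound $\tfrac{8}{3}\mu\leq 3\mu$, but the argument is identical in substance.
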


\begin{proof} See Section~\ref{s_compproof}.
\end{proof}

\subsection{Experimental comparison of the bounds in Sections 
\ref{s_subcohbound} and~\ref{s_levbound}}\label{s_numcomp}
We present numerical experiments to compare the lower bounds for the number 
of sampled rows in 
Corollaries~\ref{c_cohbound} and~\ref{c_levbound}, 
for different values of coherence. This gives quantitative insight
into the comparison in Corollary~\ref{c_comp}, and illustrates the 
reduction in the number of sampled rows from
Corollary~\ref{c_levbound}, as compared to Corollary~\ref{c_cohbound}.

\subsubsection*{Experimental setup}
As in previous sections, we use $m\times n$ matrices with
$m=10^4$ rows and $n=5$ columns. The success
probability is .99; and $\epsilon=99/101$, so that the bound for
$\kappa(SQ)$ is equal to 10. Hence the bounds in Corollaries
\ref{c_cohbound} and~\ref{c_levbound}  amount to (\ref{c_lb}) and
(\ref{c_lbl}), respectively.

We consider two different leverage scores distributions: A distribution 
generated by Algorithm~\ref{alg_geno} 
with one large leverage score in Table~\ref{t_tab2};
and a distribution generated by Algorithm~\ref{alg_zero} with as many zeros 
as possible in Table~\ref{t_tab3}.

\subsubsection*{Table~\ref{t_tab2}}
This table shows the lower bounds on the number of sampled rows, 
for a leverage score distribution 
generated with Algorithm~\ref{alg_geno} that consists of
one large leverage score, equal to the coherence,
and all remaining leverage scores being non-zero and identical.
The bounds, as well as the approximation $\tau$ to $\|Q^TLQ\|_2$,
are displayed for eight different values of coherence, ranging 
from minimal coherence $\mu=n/m$ to $\mu =100n/m$.

Table~\ref{t_tab2} illustrates that with increasing coherence, the
number of sampled rows implied by Corollary~\ref{c_levbound} is only
about 20 percent of that from Corollary~\ref{c_cohbound}. This is because 
$\tau$ increases much more slowly than $\mu$. For instance, 
 $\tau\approx \mu/10$ when $\mu=100n/m$.

\begin{table}
\begin{center}
\begin{tabular}{|c||c|c|c|c|c|c|c|c|}
\hline
$\mu/(n/m)$ & 1 & 5 & 10 & 15 & 20 &25 &50& 100\\
\hline\hline 
Cor. \ref{c_cohbound} & 108 & 540 & 1,079 & 1,618 & 2,157 &2,697 &5,393 &10,786\\
Cor. \ref{c_levbound} & 96  & 191 & 310   & 432   & 556   &682   &1,3343 & 2,777\\
\hline
$\tau/(n/m)$ & 1.00 & 1.01 & 1.04   & 1.10 &1.19 & 1.30 & 2.22 &9.95\\
\hline
\end{tabular}
\end{center}
\bigskip
\caption{Lower bounds for number of sampled rows in Corollaries
\ref{c_cohbound} and~\ref{c_levbound}
and approximation $\tau$, for different values of coherence~$\mu$.  
The first value represents minimal coherence $\mu=n/m$.
Here $m=10,000$, $n=5$, $\delta=.01$, $\epsilon=99/101$, with
leverage scores generated by Algorithm~\ref{alg_geno}.}\label{t_tab2}
\end{table}

\subsubsection*{Table~\ref{t_tab3}}
This table shows the lower bounds on the number of sampled rows. The corresponding
leverage score distribution is
generated with Algorithm~\ref{alg_zero} and consists of
as many zeros as possible. All non-zero leverage
scores, expect possibly one, are equal to the coherence $\mu$, so that 
$\tau\approx\mu$.
The bounds are displayed for eight different values of coherence, ranging 
from minimal coherence $\mu=n/m$ to $\mu =100n/m$.

The bounds for Corollary~\ref{c_cohbound} are the same as in 
Table~\ref{t_tab2}, because the coherence values are the same.
Since $\tau=\mu$, the difference between Corollaries \ref{c_cohbound}
and~\ref{c_levbound} is not as drastic as in Table~\ref{t_tab2}, yet
it increases with increasing coherence. For $\mu=100n/m$,
Corollary~\ref{c_levbound} remain informative, while
Corollary~\ref{c_cohbound} does not.

\begin{table}
\begin{center}
\begin{tabular}{|c||c|c|c|c|c|c|c|c|}
\hline
$\mu/(n/m)$ & 1 & 5 & 10 & 15 & 20 &25 &50& 100\\
\hline\hline 
Cor. \ref{c_cohbound} & 108 & 540 & 1,079 & 1,618 & 2,157&2,697 &5,393&10,787\\
Cor. \ref{c_levbound}& 96 & 477 & 954 & 1,431 & 1,908 &2,385 &4,770 & 9,539\\
\hline
\end{tabular}
\end{center}
\bigskip
\caption{Lower bounds for number of sampled rows
in Corollaries \ref{c_cohbound} and~\ref{c_levbound},
for different values of coherence~$\mu$.  
The first value represents minimal coherence $\mu=n/m$.
Here $m=10,000$, $n=5$, $\delta=.01$, $\epsilon=99/101$, with
leverage scores generated by Algorithm~\ref{alg_zero}.}\label{t_tab3}
\end{table}

\subsection{Conclusions for Section~\ref{s_leverage}}\label{s_lqconc}
The goal of this section was to derive condition number
bounds that are based on leverage scores
rather than just coherence, when rows are sampled uniformly with replacement 
(Algorithm~\ref{alg_with}).
Corollary~\ref{c_comp} and the numerical experiments illustrate that 
the lower bound on the number of sampled rows
implied by Corollary~\ref{c_levbound} is smaller than that from
Corollary~\ref{c_cohbound}. 

Although the coherence based bound in Theorem~\ref{t_cohbound}
is derived from a stronger concentration inequality than 
the one for Theorem~\ref{t_levbound},
this difference disappears in the weakening necessary to obtain 
lower bounds for the amount of sampling.  Even in cases when 
the leverage score measure $\tau$ is the same as the coherence,
Corollary~\ref{c_levbound} still retains a small advantage, which 
can increase with
increasing coherence. Hence Corollary~\ref{c_levbound} tends to remain 
informative
for larger values of coherence, even when Corollary~\ref{c_cohbound} fails.

The difference in implied sampling amounts becomes more drastic in the 
presence widely varying non-zero leverage scores,
and can be as high as ten percent. This is because the coherence-based 
bound in Corollary~\ref{c_cohbound} cannot take advantage of the distribution
of the leverage scores.

Hence, when it comes to lower bounds for the number of rows sampled
uniformly with replacement, we recommend Corollary~\ref{c_levbound}. 

We have yet to derive leverage score based bounds for the
other two sampling strategies, uniform sampling without replacement 
(Algorithm~\ref{alg_without}) and Bernoulli sampling 
(Algorithm~\ref{alg_bernoulli}).

\section{Algorithms for generating matrices with prescribed
coherence and leverage scores}\label{s_genalg}
In order to investigate the efficiency of the sampling methods in 
Section~\ref{s_samp}, and test 
the tightness of the bounds in Sections \ref{s_cohbound} and~\ref{s_leverage}, 
we need to generate matrices with 
orthonormal columns that have prescribed leverage scores and coherence. 
The algorithms are implemented in the Matlab 
package \textsl{kappa\_SQ\_v3} \cite{impl}.

We present algorithms for generating matrices with prescribed leverage scores
and coherence (Section~\ref{s_alg}), and for generating
particular leverage score distributions
with prescribed coherence (Section~\ref{s_precoh}).
Such distributions can then, in turn, serve as inputs for the algorithm 
in Section~\ref{s_alg}. Furthermore we present two classes 
of structured matrices with prescribed coherence that are easy and fast 
to generate (Section~\ref{s_mat}).

\subsection{Matrices with prescribed leverage scores}\label{s_alg}
We present an algorithm that generates matrices
with orthonormal columns that have prescribed leverage scores.
In Section~\ref{s_pre} we prove an existence 
result to show that this is always possible.

Algorithm~\ref{alg_gen} is a transposed version of \cite[Algorithm 3]{DHST05}.
It repeatedly applies $m\times m$ Givens rotations $G_{ij}$
that rotate two rows $i$ and $j$, and are computed from
numerically stable expressions \cite[section 3.1]{DHST05}.
At most $m-1$ such rotations are necessary.
Since each rotation affects only two rows, Algorithm~\ref{alg_gen}
requires $\mathcal{O}(mn)$ arithmetic operations.

\begin{algorithm}
\caption{Generating a matrix with prescribed leverage scores
\cite{DHST05}}\label{alg_gen}
\begin{algorithmic}
\REQUIRE Integers $m$ and $n$ with $m\geq n\geq 1$\\
$\qquad\ $ Vector $\ell$ with elements $0\leq \ell_1\leq\cdots
\leq \ell_m\leq 1$ 
and $\sum_{j=1}^m{\ell_j}=n$ 
\ENSURE $m\times n$ matrix $Q$ with $Q^TQ=I_n$ and 
leverage scores $\|e_j^TQ\|_2^2 =\ell_j$, $1\leq j\leq m$\\
$\qquad$\\
\STATE $Q=\begin{pmatrix}I_n & 0_{n\times (m-n)}\end{pmatrix}^T$
$\qquad$ \COMMENT{Initialization}
\REPEAT
\STATE Determine indices $i<k<j$ with\\
$\|e_i^TQ\|_2^2<\ell_i$, $\|e_k^TQ\|_2^2=\ell_k$, $\|e_j^TQ\|_2^2>\ell_j$
\IF{$\ell_i-\|e_i^TQ\|_2^2\leq \|e_j^TQ\|_2^2-\ell_j$}
\STATE Apply rotation $G_{ij}$ to rows $i$ and $j$ so that 
$\|e_i^TG_{ij}\,Q\|_2^2=\ell_i$
\ELSE 
\STATE Apply rotation $G_{ij}$ to rows $i$ and $j$ so that 
$\|e_j^TG_{ij}\,Q\|_2^2=\ell_j$
\ENDIF
\STATE $Q=G_{ij}\,Q$ $\qquad$ \COMMENT{Update}
\UNTIL{no more such indices exist}
\end{algorithmic}
\end{algorithm}

\subsection{Leverage score distributions with prescribed 
coherence}\label{s_precoh}
We present algorithms that generate leverage score distributions for 
prescribed coherence. The resulting distributions then serve 
as inputs for Algorithm~\ref{alg_gen}.
These particular leverage score distribution help to distinguish the effect 
of coherence, which is the largest leverage score, from that of the
remaining leverage scores.

\subsubsection*{One large leverage score}
Given a prescribed coherence $\mu$,
Algorithm~\ref{alg_geno} generates a distribution consisting of 
one large leverage score equal to $\mu$ and the remaining leverage scores
being identical and non-zero. 

\begin{algorithm}
\caption{Generating a leverage score distribution with prescribed coherence:
One large leverage score}\label{alg_geno}
\begin{algorithmic}
\REQUIRE Integers $m$ and $n$ with $m\geq n\geq 1$\\
\STATE $\qquad\ $ Real number $\mu$ with $n/m\leq \mu\leq 1$\\
\ENSURE Vector $\ell$ with elements $\ell_1=\mu$, $0< \ell_j \leq 1$ 
and $\sum_{j=1}^m{\ell_j}=n$\\
$\qquad$\\
\STATE $\ell_1=\mu$ 
\FOR{$j=2:m$}
\STATE $\ell_j=\tfrac{n-\mu}{m-1}$ 
\ENDFOR
\end{algorithmic}
\end{algorithm}

In the special case of minimal coherence $\mu=n/m$,
Algorithm~\ref{alg_geno} generates $m$ identical leverages equal to
$\mu$, which is the only possible leverage score distribution in this case.

\subsubsection*{Many zero leverage scores}
Given a prescribed coherence, Algorithm~\ref{alg_zero}
generates a distribution with as many zero leverage scores as possible.
This serves as an ``adversarial'' distribution for the sampling algorithms
in Section~\ref{s_samp}. 

Given a prescribed coherence $\mu$, Algorithm~\ref{alg_zero} first determines 
the smallest number of rows $m_s$ that can realize this coherence, 
sets $m_s-1$ leverage scores equal to $\mu$, assigns another leverage score to
to take up the possibly non-zero slack, and sets the remaining 
leverage scores to zero.

\begin{algorithm}
\caption{Generating a leverage score distribution with prescribed coherence:
Many zero leverage scores}\label{alg_zero}
\begin{algorithmic}
\REQUIRE Integers $m$ and $n$ with $m\geq n\geq 1$\\
$\qquad\ $ Real number $\mu$ with $n/m\leq \mu\leq 1$\\
\ENSURE  Vector $\ell$ with elements $\ell_1=\mu$, $0\leq \ell_j \leq 1$ 
and $\sum_{j=1}^m{\ell_j}=n$\\
$\qquad$\\
\STATE $m_s=\lceil n/\mu\rceil$ $\qquad$ \COMMENT{Number of nonzero rows}
\FOR{$j=1:m_s-1$}
\STATE $\ell_j=\mu$ 
\ENDFOR
\STATE $\ell_{m_s}= n-(m_s-1)\,\mu$
\FOR{$j=m_s+1:m$}
\STATE $\ell_j=0$ 
\ENDFOR
\end{algorithmic}
\end{algorithm}

\subsection{Structured matrices with prescribed coherence}\label{s_mat}
We present two clas\-ses of structured matrices with orthonormal columns
that have prescribed coherence.  Although the structure puts constraints on the
matrix dimensions, the generation of these matrices is faster than
running Algorithm~\ref{alg_gen}. Note that the matrices produced by
Algorithm~\ref{alg_gen} also have structure, but it is not easily
characterized.

\paragraph{Stacks of diagonal matrices}
Given matrix dimensions $m$ and $n$, where $s=m/n$ is an integer,
and prescribed coherence $\mu$. The $m\times n$ matrix $Q$ below has
orthonormal columns and coherence $\mu$, and consists of
$s$ stacks of $n\times n$ diagonal matrices,
$$Q=
\begin{pmatrix}\sqrt{\mu}\,I_n\\ \phi\,  I_n\\ \vdots \\ \phi\, I_n\end{pmatrix}
\qquad where \qquad
\phi\equiv \sqrt{\frac{1-\mu}{\frac{m}{n}-1}}.$$

\paragraph{Matrices with Hadamard structure}
Given matrix dimensions $m$ and $n$, where $m=2^k$ and $n<m$ is also a power
of two, and prescribed coherence $\mu$. The $m \times n$ matrix
$$Q= D_k\begin{pmatrix}I_n\\0\end{pmatrix}$$
has orthonormal columns and coherence $\mu$, and is defined recursively
as follows.
For
$$\alpha\equiv\sqrt{\frac{\mu-\frac{n-1}{m-1}}{1-\frac{n-1}{m-1}}},\qquad
\beta\equiv \sqrt{\frac{1-\alpha^2}{m-1}}$$
define square matrices $B_j$ of dimension $2^j$ and square matrices $D_j$
of dimension $2^{j+1}$ as follows,
\begin{eqnarray*}
&B_0=\beta, \qquad 
&B_{j+1}=\begin{pmatrix} -B_j & B_j \\ B_j & B_j\end{pmatrix} \quad\qquad
0\leq j\leq k-1\\
&D_1=\begin{pmatrix}\alpha & -\beta \\ \beta & \alpha\end{pmatrix},\qquad
&D_{j+1}=\begin{pmatrix}D_j & -B_j \\ B_j & D_j\end{pmatrix}.
\end{eqnarray*}
Note that only the final matrix $Q$ has orthonormal columns and coherence
$\mu$ while, in general, the intermediate matrices $B_j$ and $D_j$ do not.
We omit the messy induction proof, because it does not provide much
insight.

\section{Future work}\label{s_future}
We have investigated three strategies for uniform sampling of rows from matrices 
with orthonormal columns: Without replacement, with replacement, and Bernoulli
sampling. We derived bounds on the condition numbers of the sampled
matrices, in terms of coherence and leverage scores. Numerical
experiments confirm that the bounds are realistic, even for 
high success probabilities and
matrices with small dimensions.

The following work still needs to be done.

\begin{itemize}
\item Conversion of the \textsl{kappa\_SQ\_v3} MATLAB toolbox
from a research code to a 
robust, flexible, and user-friendly GUI that facilitates 
reproducible research in the randomized algorithms community.

\item Tightening of Corollary~\ref{c_cohbound} so that it retains the 
strength of the Chernoff concentration inequality 
inherent in Theorem~\ref{t_levbound}.

\item Extension of the condition number bounds in Section~\ref{s_leverage} to 
 uniform sampling without replacement 
(Algorithm~\ref{alg_without}) and Bernoulli sampling 
(Algorithm~\ref{alg_bernoulli}). 

\item  Determination of a statistically significant number of runs
for each sampling amount $c$, for two purposes:
\begin{enumerate}
\item To assert, within a specific confidence interval, bounds on the 
condition numbers of the \textit{actually sampled} matrices.
\item To assert with a specific confidence that the probabilistic
expressions in Sections \ref{s_cohbound} and~\ref{s_leverage} 
do indeed represent bounds.
\end{enumerate}
\end{itemize}

\subsection*{Acknowledgements}
We are very grateful to John Holodnak, Petros Drineas and two anonymous
reviewers for reading our paper so 
carefully and for providing many helpful suggestions.

\appendix  
\section{Proofs for 
Sections \ref{s_cohbound} and \ref{s_levbound}}\label{s_app}
For the coherence-based bounds in Section~\ref{s_cohbound} we first
present a matrix concentration inequality (Section~\ref{s_chernoff}), 
and then the proofs of
Theorem~\ref{t_cohbound} (Section~\ref{s_tcohproof})
and Corollary~\ref{c_cohbound} (Section~\ref{s_ccohproof}).

For the bound based on leverage scores in Section~\ref{s_levbound},
we first present a matrix concentration inequality
(Section~\ref{s_bernstein}), and then the proof of
Theorem~\ref{t_levbound} (Section~\ref{s_tlevproof}).
\subsection{Matrix Chernoff Concentration inequality}\label{s_chernoff}
The matrix concentration inequality below is the basis for
Theorem~\ref{t_cohbound} and Corollary~\ref{c_cohbound}.

Denote the eigenvalues of a Hermitian matrix $Z$ by $\lambda_j(Z)$,
and the smallest and largest eigenvalues by
$\lambda_{min}(Z)\equiv \min_j{\lambda_j(Z)}$ and
$\lambda_{max}(Z) \equiv \max_j{\lambda_j(Z)}$, respectively.

\begin{theorem}[Corollary 5.2 in \cite{tropp11}]\label{t_tropp}
Let $X_j$ be a finite number of independent random $n\times n$
Hermitian positive semidefinite matrices with 
$\max_j{\|X_j\|_2}\leq \tau$.
Define 
$$\omega_{min}\equiv\lambda_{min}\left(\sum_j{\E[X_j]}\right) \qquad
\omega_{max}\equiv\lambda_{max}\left(\sum_j{\E[X_j]}\right),$$
and $f(x)\equiv e^x(1+x)^{-(1+x)}$.
Then for any $0\leq \epsilon< 1$
$$\Prob\left[\lambda_{min}\left(\sum_j{X_j}\right) 
\leq (1-\epsilon) \>\omega_{min} \right]\leq n 
\> f(-\epsilon)^{\omega_{min}/\tau},$$
and for any $\epsilon\geq 0$
$$\Prob\left[\lambda_{max}\left(\sum_j{X_j}\right) 
\geq (1+\epsilon) \>\omega_{max} \right]\leq n  \> 
f(\epsilon)^{\omega_{max}/\tau}.$$
\end{theorem}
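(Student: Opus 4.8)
The plan is to prove this via the \emph{matrix Laplace transform method} of Ahlswede--Winter and Tropp, which reduces a tail bound on the extreme eigenvalues of $\sum_j X_j$ to estimating a matrix moment generating function summand by summand. I would establish the upper-tail (largest eigenvalue) bound in full and then obtain the lower-tail bound by applying the same argument to $-X_j$. For the upper tail, the starting point is the matrix analogue of Markov's inequality: for every $\theta>0$,
$$\Prob\left[\lambda_{max}\left(\sum_j X_j\right)\geq t\right]\leq e^{-\theta t}\,\E\left[\mathrm{tr}\exp\left(\theta\sum_j X_j\right)\right].$$

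The crucial step is to control the expected trace exponential of a \emph{sum} of independent matrices, where the obstruction is that $\exp(A+B)\neq\exp(A)\exp(B)$ in the noncommutative setting. Here I would invoke Lieb's concavity theorem, which yields the subadditivity of the matrix cumulant generating function,
$$\E\left[\mathrm{tr}\exp\left(\theta\sum_j X_j\right)\right]\leq \mathrm{tr}\exp\left(\sum_j\log\E\left[e^{\theta X_j}\right]\right),$$
thereby decoupling the summands at the cost of replacing each $X_j$ by $\log\E[e^{\theta X_j}]$. I expect this to be the main obstacle: Lieb's theorem is the one genuinely deep ingredient, and everything downstream is comparatively mechanical.

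Next I would bound each matrix moment generating function using the hypothesis $0\preceq X_j\preceq \tau I$. The scalar inequality $e^{\theta x}\leq 1+\tfrac{e^{\theta\tau}-1}{\tau}\,x$, valid on $0\leq x\leq\tau$ by convexity, transfers eigenvalue-wise to $e^{\theta X_j}\preceq I+\tfrac{e^{\theta\tau}-1}{\tau}X_j$. Taking expectations (a positive map preserves the Loewner order), then applying operator monotonicity of the logarithm together with the commuting inequality $\log(I+A)\preceq A$, gives $\log\E[e^{\theta X_j}]\preceq \tfrac{e^{\theta\tau}-1}{\tau}\E[X_j]$. Summing over $j$ and using monotonicity of $\mathrm{tr}\exp$ under the Loewner order yields
$$\E\left[\mathrm{tr}\exp\left(\theta\sum_j X_j\right)\right]\leq \mathrm{tr}\exp\left(g(\theta)\sum_j\E[X_j]\right),\qquad g(\theta)\equiv\frac{e^{\theta\tau}-1}{\tau}>0.$$
Since $g(\theta)>0$, the largest eigenvalue of the argument is $g(\theta)\,\omega_{max}$, so $\mathrm{tr}\exp(\cdot)\leq n\,e^{g(\theta)\omega_{max}}$ because all $n$ eigenvalues are bounded by this maximum.

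Finally I would combine these into $\Prob[\lambda_{max}(\sum_j X_j)\geq t]\leq n\,e^{-\theta t+g(\theta)\omega_{max}}$ and optimize in $\theta$. Setting $t=(1+\epsilon)\omega_{max}$ and choosing $\theta=\tfrac{1}{\tau}\log(1+\epsilon)$ makes $e^{\theta\tau}=1+\epsilon$, collapsing the exponent to exactly $\tfrac{\omega_{max}}{\tau}\log f(\epsilon)$ and giving the stated bound $n\,f(\epsilon)^{\omega_{max}/\tau}$. For the lower tail I would run the identical argument on $-\sum_j X_j$; the only change is that the relevant scalar bound $e^{-\theta x}\leq 1+\tfrac{e^{-\theta\tau}-1}{\tau}x$ now carries a \emph{negative} coefficient $g(-\theta)<0$, so the largest eigenvalue of $g(-\theta)\sum_j\E[X_j]$ becomes $g(-\theta)\,\omega_{min}$ and $\mathrm{tr}\exp$ is controlled by $n\,e^{g(-\theta)\omega_{min}}$. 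Choosing $\theta=-\tfrac{1}{\tau}\log(1-\epsilon)$ then produces $f(-\epsilon)^{\omega_{min}/\tau}$; the restriction $\epsilon<1$ is precisely what keeps this $\theta$ positive and $\log(1-\epsilon)$ finite.
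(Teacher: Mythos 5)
Your argument is correct and is essentially the proof of the result as given in the cited source: the paper itself offers no proof of Theorem~\ref{t_tropp}, quoting it verbatim as Corollary~5.2 of Tropp's work, and your sketch faithfully reconstructs that proof via the matrix Laplace transform method, Lieb's concavity theorem, the semidefinite bound $\log\E[e^{\theta X_j}]\preceq \frac{e^{\theta\tau}-1}{\tau}\E[X_j]$, and the optimal choices $\theta=\pm\tau^{-1}\log(1\pm\epsilon)$. In particular you correctly handle the one delicate sign issue in the lower tail, where the negative coefficient $g(-\theta)$ converts $\lambda_{max}$ of the scaled expectation into $g(-\theta)\,\omega_{min}$.
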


\subsection{Proof of Theorem~\ref{t_cohbound}}\label{s_tcohproof}
We present a separate proof for each sampling method.

\paragraph{Algorithm~\ref{alg_without}: Sampling without replacement}
The proof follows directly from \cite[Lemma 3.4]{tropp11b}.

\paragraph{Algorithm~\ref{alg_with}: Sampling with replacement}
The proof is based on Theorem~\ref{t_tropp}, and
turns out to be somewhat similar to that of \cite[Lemma 3.4]{tropp11b}.

Set $X_t\equiv\frac{m}{c}\>Q^Te_{k_t}e_{k_t}^TQ$, $1\leq t\leq c$.
Then $X_t$ is $n\times n$  Hermitian positive semidefinite and
$\|X_t\|_2\leq\tfrac{m}{c}\|e_{k_t}^TQ\|_2^2\leq \tfrac{m\mu}{c}$.
Hence we set $\tau=m\mu/c$.
Furthermore,
$$\E[X_t]=\sum_{j=1}^m{\tfrac{1}{m}\>
\left(\tfrac{m}{c}\>Q^Te_je_j^TQ\right)}=
\tfrac{1}{c}\>\sum_{j=1}^m{Q^Te_je_j^TQ}=\tfrac{1}{c} I_n.$$
Hence the eigenvalues of the sum are
$\lambda_j\left(\sum_{t=1}^c{\E[X_t]}\right)=\lambda_j(I_n)=1$,
$1\leq j\leq n$, and we set $\omega_{min}=\omega_{max}=1$.
Applying Theorem~\ref{t_tropp} to $\sum_{t=1}^c{X_t}=Q^TS^TSQ$ gives
\begin{eqnarray*}
\Prob\left[\lambda_{min}\left(Q^TS^TSQ\right) 
\leq 1-\epsilon \right]\leq\ &  n f(-\epsilon)^{c/(m\mu)}\\
\Prob\left[\lambda_{max}\left(Q^TS^TSQ\right) 
\geq 1+\epsilon \right]\leq\ & n f(\epsilon)^{c/(m\mu)}.
\end{eqnarray*}
The result follows from Boole's inequality \cite[p. 16]{Ross}.

\paragraph{Algorithm~\ref{alg_bernoulli}: Bernoulli sampling}
The proof is similar to the one above, and a special case of
\cite[Theorem 6.1]{GT11}. 

Set
$$X_j\equiv\tfrac{m}{c}\> \begin{cases}
Q^Te_je_j^TQ & \text{with probability $\frac{c}{m}$}\\
0_{n\times n}& \text{with probability $1-\frac{c}{m}$} 
\end{cases}, \qquad 1\leq j\leq m.$$
Then $X_j$ is $n\times n$  Hermitian positive semidefinite,
$\|X_j\|_2\leq\tfrac{m}{c}\|e_j^TQ\|_2^2\leq \tfrac{m\mu}{c}$.
As above, we set $\tau=m\mu/c$.
Furthermore,
$$\E[X_j]=\tfrac{c}{m}\cdot \tfrac{m}{c}Q^Te_je_j^TQ
+(1-\tfrac{c}{m})\cdot 0_{n\times n}=Q^Te_je_j^TQ,$$
which implies $\sum_{j=1}^m{\E[X_j]}=\sum_{j=1}^m{Q^Te_je_j^TQ}=I_n$.
Now proceed as in the above proof for Algorithm~\ref{alg_with}, and 
apply Theorem~\ref{t_tropp} to $\sum_{j=1}^m{X_j}=Q^TS^TSQ$.

\subsection{Proof of Corollary~\ref{c_cohbound}}\label{s_ccohproof}
First we simplify the bound in Theorem~\ref{t_cohbound} based on the
inequality $f(-x)\leq f(x)$ for $0<x<1$ .
This implies for Theorem~\ref{t_cohbound} that
$$\delta\equiv n\left(f(-\epsilon)^{c/(m\mu)}+
f(\epsilon)^{c/(m\mu)}\right)
\delta\leq 2n\>f(\epsilon)^{c/(m\mu)}.$$
Solving for $c$ gives 
\begin{eqnarray*}
c\geq m\mu\>\frac{\ln(2n/\delta)}{-\ln{f(\epsilon)}}.
\end{eqnarray*}
If we can show that $-\ln{f(\epsilon)}>\epsilon^2/3$, then
the above lower bound for $c$ definitely holds if
$$c\geq 3m\mu\>\frac{\ln(2n/\delta)}{\epsilon^2}.$$

To show $-\ln{f(\epsilon)}>\epsilon^2/3$ for $0<\epsilon<1$, apply 
the definition $f(x)= e^x(1+x)^{-(1+x)}$ so that
$h(x)\equiv -\ln{f(x)} = (1+x)\ln{(1+x)}-x$. 
Expand into the power series 
$\ln{(1+x)}=\sum_{j=1}^{\infty}{(-1)^{j+1} \tfrac{x^j}{j}}$.
For $0<x< 1$ this yields
$h(x)=\frac{1}{2}x^2-\frac{1}{6}x^3+E(x)$, where 
$$E(x)\equiv\sum_{j=4}^{\infty}{(-1)^j\>\frac{x^j}{(j-1)j}}=
\sum_{j=2}^{\infty}{
\left(\frac{2j+1-(2j-1)x}{(2j-1)2j(2j+1)}\right)\> x^{2j}}>0,$$
since each summand is positive for $0<x< 1$.
Thus for $0<x< 1$ we obtain
$$h(x)>\frac{1}{2}x^2-\frac{1}{6}x^3=
\frac{3-x}{6}\>x^2\geq \frac{x^2}{3}.$$

\subsection{Matrix Bernstein concentration inequality}\label{s_bernstein}
The matrix concentration inequality below is the basis for
Theorem~\ref{t_levbound}.
It is a version specialized to square matrices of \cite[Theorem 4]{Recht11}.
In numerical experiments we found it to be tighter than 
\cite[Theorem 4]{DMMS10} 
and the Frobenius norm bound \cite[Theorem 2]{DKM06}.

\begin{theorem}[Theorem 4 in \cite{Recht11}]
Let $X_j$ be $m$ independent random $n\times n$ matrices
with $\E[X_j]=0_{n\times n}$, $1\leq j\leq m$. Let 
$\rho_j\equiv \max\{\|\E[X_jX_j^T]\|_2, \, \|\E[X_j^TX_j]\|_2\}$ and
$\max_{1\leq j\leq m}{\|X_j\|_2}\leq \tau$.
Then for any $\epsilon >0$ 
$$\Prob\left[\|\sum_{j=1}^m{X_j}\|_2> \epsilon\right]\leq
2n\>\exp\left(-\tfrac{3}{2}\>\frac{\epsilon^2}{3\sum_{j=1}^m{\rho_j}
+\tau\epsilon}\right).$$
\end{theorem}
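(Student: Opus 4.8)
The plan is to prove this via the matrix Laplace transform method, the standard machinery behind matrix Chernoff and Bernstein bounds. The first step is to remove the non-Hermitian character of the $X_j$ by passing to their Hermitian dilations. Define the $2n\times 2n$ Hermitian matrices
$$Y_j\equiv\begin{pmatrix}0 & X_j\\ X_j^T & 0\end{pmatrix},\qquad 1\leq j\leq m.$$
Since the nonzero eigenvalues of $Y_j$ are $\pm$ the singular values of $X_j$, we have $\lambda_{max}(\sum_j Y_j)=\|\sum_j X_j\|_2$ and $\lambda_{max}(Y_j)=\|X_j\|_2\leq\tau$. Moreover $\E[Y_j]=0$, and
$$Y_j^2=\begin{pmatrix}X_jX_j^T & 0\\ 0 & X_j^TX_j\end{pmatrix},$$
so $\|\E[Y_j^2]\|_2=\rho_j$. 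The problem is thereby reduced to a one-sided tail bound for $\lambda_{max}$ of a sum of bounded, zero-mean, independent Hermitian matrices; this reduction also explains the dimensional factor $2n$ in the final bound.

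For the reduced problem I would apply Markov's inequality to the trace exponential: for any $\theta>0$,
$$\Prob\left[\lambda_{max}\left(\sum_j Y_j\right)\geq\epsilon\right]\leq e^{-\theta\epsilon}\,\E\left[\mathrm{tr}\exp\left(\theta\sum_j Y_j\right)\right],$$
using that $\lambda_{max}(Z)\geq\epsilon$ forces $\mathrm{tr}\,e^{\theta Z}\geq e^{\theta\epsilon}$. The crucial step is then to control the expected trace exponential through subadditivity of the matrix cumulant generating function, which (via Lieb's concavity theorem applied to $A\mapsto\mathrm{tr}\exp(H+\log A)$) yields
$$\E\left[\mathrm{tr}\exp\left(\theta\sum_j Y_j\right)\right]\leq\mathrm{tr}\exp\left(\sum_j\log\E[e^{\theta Y_j}]\right).$$
This is the single deep ingredient; everything afterward is scalar analysis.

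Next I would bound each matrix moment generating function. The scalar inequality $(e^{\theta x}-\theta x-1)/x^2\leq g(\theta)$ for $x\leq\tau$, with $g(\theta)\equiv\tfrac{\theta^2/2}{1-\theta\tau/3}$ valid for $0<\theta<3/\tau$, holds at every eigenvalue of $Y_j$ (all of which are at most $\tau$) and hence lifts to the Loewner bound $e^{\theta Y_j}\preceq I+\theta Y_j+g(\theta)Y_j^2$. Taking expectations, using $\E[Y_j]=0$ together with $I+A\preceq e^A$ and operator monotonicity of $\log$, gives $\log\E[e^{\theta Y_j}]\preceq g(\theta)\,\E[Y_j^2]$. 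Monotonicity of $\mathrm{tr}\exp$ under the Loewner order then yields
$$\mathrm{tr}\exp\left(\sum_j\log\E[e^{\theta Y_j}]\right)\leq 2n\,\exp\left(g(\theta)\,\lambda_{max}\left(\textstyle\sum_j\E[Y_j^2]\right)\right)\leq 2n\,\exp\left(g(\theta)\textstyle\sum_j\rho_j\right),$$
where the last inequality uses that each $\E[Y_j^2]$ is positive semidefinite, so $\lambda_{max}$ of their sum is at most $\sum_j\rho_j$.

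Finally, writing $\sigma^2\equiv\sum_j\rho_j$, I would optimize $-\theta\epsilon+g(\theta)\sigma^2$ over $\theta$. The Bernstein choice $\theta=\epsilon/(\sigma^2+\tau\epsilon/3)$, which satisfies $\theta\tau/3<1$, collapses the exponent to $-\tfrac{\epsilon^2/2}{\sigma^2+\tau\epsilon/3}=-\tfrac{3}{2}\tfrac{\epsilon^2}{3\sum_j\rho_j+\tau\epsilon}$, which is exactly the claimed bound. The main obstacle is the subadditivity step: it rests on Lieb's concavity theorem, whose proof is genuinely nontrivial, whereas the scalar estimate for $g(\theta)$ (obtained from $k!\geq 2\cdot 3^{k-2}$ for $k\geq 2$) and the final optimization in $\theta$ are routine.
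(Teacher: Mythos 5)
Your argument is correct, but note that the paper does not prove this statement at all: it is imported verbatim as Theorem~4 of the cited reference (Recht's matrix completion paper) and used as a black box, so there is no in-paper proof to compare against. What you have written is a sound, self-contained derivation by the matrix Laplace transform method: the Hermitian dilation correctly converts the two-sided spectral-norm event into a one-sided $\lambda_{max}$ event and accounts for the factor $2n$; the identities $\lambda_{max}(Y_j)=\|X_j\|_2$ and $\|\E[Y_j^2]\|_2=\rho_j$ are right; the MGF bound $\log\E[e^{\theta Y_j}]\preceq g(\theta)\E[Y_j^2]$ with $g(\theta)=\tfrac{\theta^2/2}{1-\theta\tau/3}$ is the standard Bernstein estimate (valid because every eigenvalue of $Y_j$ lies in $[-\tau,\tau]$ and $x\mapsto(e^{\theta x}-\theta x-1)/x^2$ is nondecreasing); and the choice $\theta=\epsilon/(\sigma^2+\tau\epsilon/3)$ does collapse the exponent to $-\tfrac{3}{2}\epsilon^2/(3\sum_j\rho_j+\tau\epsilon)$. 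One remark on provenance: you route the subadditivity step through Lieb's concavity theorem (Tropp's approach), whereas the original proof behind the cited theorem goes through the Ahlswede--Winter/Golden--Thompson argument, which bounds $\E[\mathrm{tr}\exp(\theta\sum_j Y_j)]$ by $2n\prod_j\|\E[e^{\theta Y_j}]\|_2$. Because the stated variance proxy is the sum $\sum_j\rho_j$ of individual norms rather than $\|\sum_j\E[Y_j^2]\|_2$, both routes yield exactly this bound; your route is the one that would also give the sharper variance parameter if one wanted it. The only cosmetic looseness is in the final display: the inequality $\lambda_{max}(\sum_j\E[Y_j^2])\leq\sum_j\rho_j$ rests on subadditivity of $\lambda_{max}$ for Hermitian matrices together with $\lambda_{max}=\|\cdot\|_2$ for positive semidefinite matrices, not on positive semidefiniteness alone; this is easily repaired and does not affect correctness.
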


\subsection{Proof of Theorem~\ref{t_levbound}}\label{s_tlevproof}
The proof is similar to that of \cite[Lemma 3]{BalzRN11}. 
Represent the outcome of uniform sampling with 
replacement in Algorithm~\ref{alg_with} by
$Q^TS^TSQ=\sum_{t=1}^c{Y_t}$, where
$Y_t\equiv \frac{m}{c}\>Q^Te_{k_t}e_{k_t}^TQ$ are $n\times n$ matrices, 
$1\leq t\leq c$, with expected value
$$\E[Y_t]=\sum_{j=1}^m{\frac{1}{m}\>\frac{m}{c} Q^Te_je_j^TQ}
=\frac{1}{c}\>\sum_{j=1}^m{Q^Te_je_j^TQ}=\frac{1}{c} I_n.$$
Thus, the zero mean versions are
$X_t\equiv Y_t-\tfrac{1}{c}I_n$.
To apply Theorem \cite[Theorem 4]{Recht11} to the $X_t$ we need to verify 
that they
fulfill the required conditions. 
First, by construction, $\E[X_t]=0$, $1\leq t\leq c$. Second,
since $Y_t$ and $I_n$ are symmetric positive semidefinite,
$$\|X_t\|_2\leq \max\{\|Y_t\|_2, \|\frac{1}{c}I_n\|_2\}=
\frac{1}{c}\max\{m\>\|e_{k_t}^TQ\|_2^2,1\}\leq \frac{m\mu}{c},$$
where the last inequality follows from the definition of $\mu$, and 
$\mu\geq n/m$. Hence we set $\tau=m\mu/c$.
Third, since $X_t$ is symmetric,
$$X_t^TX_t=X_tX_t^T=X_t^2=Y_t^2 -\frac{2}{c} Y_t+\frac{1}{c^2}I_n.$$
From $\E[Y_t]=\tfrac{1}{c} I_n$ follows
\begin{eqnarray}\label{e_ys}
\E[X_t^2]=\E[Y_t^2]-\frac{2}{c}\>\E[Y_t]+\frac{1}{c^2}I_n=
\E[Y_t^2]-\frac{1}{c^2}I_n.
\end{eqnarray}
Since $Y_t^2=\tfrac{m^2}{c^2}\>\ell_{k_t}\>Q^Te_{k_t}e_{k_t}^TQ$, we obtain
\begin{eqnarray*}
\E[Y_t^2]=\sum_{j=1}^m{\frac{1}{m}\> \frac{m^2}{c^2}\>\ell_j Q^Te_je_j^TQ}
=\frac{m}{c^2}\>Q^T \> \left(\sum_{j=1}^m{\ell_je_je_j^T}\right)\>Q
= \frac{m}{c^2}\> Q^TLQ.
\end{eqnarray*}
Substituting this into (\ref{e_ys}) yields
$$\E[X_t^2]=\tfrac{1}{c^2} \> \left(m\> Q^TLQ - I_n\right).$$
Positive semi-definiteness gives
$$\|\E[X_t^2]\|_2\leq \frac{1}{c^2}\>\max\{m\>\|Q^TLQ\|_2,\, 1\}=
\frac{m}{c^2}\>\|Q^TLQ\|_2.$$
We set $\rho_t=\tfrac{m}{c^2}\> \|Q^TLQ\|_2$.
Applying \cite[Theorem 4]{Recht11}  to
$$\sum_{t=1}^c{X_t}=\sum_{t=1}^c{\left(Y_t-\tfrac{1}{c}I_n\right)}=
(SQ)^T(SQ)- I_n$$
shows that
$\|\sum_{t=1}^c{X_t}\|_2\leq \epsilon$ with probability at least
$1-\delta$.

\section{Two-norm bound for scaled matrices, and proofs
for Sections \ref{s_clevbound} and~\ref{s_ancomp}}\label{s_app2}
We derive a bound for the two-norm of diagonally scaled matrices
(Section~\ref{s_tns}),
which leads immediately to the proofs
of Corollary~\ref{c_1} (Section~\ref{s_lqproof}), and
Corollary~\ref{c_comp} (Section~\ref{s_compproof}).

\subsection{Bound}\label{s_tns}
We present two majorization bounds for Hadamard products of vectors
(Lemmas \ref{l_s1} and~\ref{l_s2}), and use them to 
derive a bound for the two-norm of diagonally scaled matrices
(Theorem~\ref{t_2}). 

\begin{definition}[Definition 4.3.41 in \cite{HoJ12}]\label{d_maj}
Let $a$ and $b$ be vectors with $m$ real elements. The 
elements, labelled in algebraically decreasing order, are 
$a_{[1]}\geq \cdots\geq a_{[m]}$ and 
$b_{[1]}\geq \cdots\geq b_{[m]}$. 
The vector $a$ {\rm weakly majorizes} the vector $b$, if 
$$\sum_{j=1}^k{a_{[j]}}\geq \sum_{j=1}^k{b_{[j]}}, \qquad 1\leq k\leq m.$$
The vector $a$ {\rm majorizes} the vector $b$, if 
$a$ weakly majorizes $b$ and also $\sum_{j=1}^m{a_{[j]}}=\sum_{j=1}^m{b_{[j]}}$.
\end{definition}

The first lemma follows from a stronger majorization inequality for 
functions that are monotone and lattice superadditive.

\begin{lemma}[Theorem II.4.2 in \cite{Bha97}]\label{l_s2}
If $b$ and $x$ are vectors with $m$ non-negative elements, then
$$\sum_{j=1}^k{b_j\, x_j}\leq \sum_{j=1}^k{b_{[j]}\, x_{[j]}},
\qquad 1\leq k\leq m.$$
\end{lemma}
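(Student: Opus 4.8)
The plan is to give a short, self-contained derivation rather than to lean on the general majorization machinery: the inequality is precisely the statement that the componentwise product of the two \emph{sorted} vectors weakly majorizes the componentwise product taken in the given order, and for the product function this can be extracted directly from the scalar rearrangement inequality together with the monotonicity of order statistics under restriction to a subset.

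Fix $k$ with $1\le k\le m$. First I would restrict attention to the first $k$ coordinates and let $\hat b_1\ge\cdots\ge\hat b_k$ and $\hat x_1\ge\cdots\ge\hat x_k$ denote the values $b_1,\dots,b_k$ and $x_1,\dots,x_k$, each sorted into non-increasing order. The classical rearrangement inequality, applied to these two $k$-tuples, says that among all pairings of the multiset $\{b_1,\dots,b_k\}$ with the multiset $\{x_1,\dots,x_k\}$ the sum of products is largest when both are aligned in the same (here decreasing) order; since the identity pairing is one such pairing,
\[
\sum_{j=1}^k b_j\,x_j\ \le\ \sum_{j=1}^k \hat b_j\,\hat x_j.
\]

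The second step replaces the sorted sub-collections by the sorted full vectors. Because $\{b_1,\dots,b_k\}$ is a sub-multiset of all $m$ entries of $b$, its $j$-th largest element cannot exceed the $j$-th largest element of the whole vector, i.e.\ $\hat b_j\le b_{[j]}$ for $1\le j\le k$, and likewise $\hat x_j\le x_{[j]}$. Here nonnegativity of the entries is essential: it lets me multiply these two inequalities termwise to obtain $\hat b_j\,\hat x_j\le b_{[j]}\,x_{[j]}$, and summing over $j=1,\dots,k$ gives $\sum_{j=1}^k \hat b_j\,\hat x_j\le\sum_{j=1}^k b_{[j]}\,x_{[j]}$. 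Combining this with the display above yields the claim for every $k$.

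The only place needing care --- what I would regard as the crux --- is the interaction between the two orderings: the left-hand side is indexed by the \emph{fixed} set $\{1,\dots,k\}$, whose $b$- and $x$-values are in general neither the largest nor identically ordered, whereas the right-hand side uses the global decreasing rearrangement. The argument resolves this by factoring the passage into two monotone steps, aligning within the subset and then enlarging the subset, each of which only increases the partial sum. As an alternative I could instead simply invoke the cited result \cite[Theorem II.4.2]{Bha97}: the map $\phi(s,t)=st$ is non-decreasing in each argument and lattice superadditive (supermodular) on the nonnegative quadrant, since $\max(s,s')\max(t,t')+\min(s,s')\min(t,t')-st-s't'=(s-s')(t'-t)\ge 0$ whenever $s\ge s'$ and $t'\ge t$, and the lemma is exactly the specialization of that theorem to this $\phi$.
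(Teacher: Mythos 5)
Your proof is correct. Note that the paper itself does not prove this lemma: it simply cites Theorem II.4.2 of Bhatia, remarking only that the statement follows from a stronger majorization inequality for functions that are monotone and lattice superadditive. Your argument is therefore a genuinely different, self-contained route, and both of its steps check out. The first step (rearrangement inequality applied to the restriction of $b$ and $x$ to the index set $\{1,\dots,k\}$) correctly bounds $\sum_{j=1}^k b_j x_j$ by the similarly-ordered pairing $\sum_{j=1}^k \hat b_j \hat x_j$, and the second step correctly uses the fact that the $j$-th largest element of a size-$k$ sub-multiset is dominated by the $j$-th order statistic of the full vector, so that $\hat b_j \le b_{[j]}$ and $\hat x_j \le x_{[j]}$; you rightly flag that non-negativity is what licenses multiplying these termwise. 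Your closing verification that $\phi(s,t)=st$ is non-decreasing and lattice superadditive on the non-negative quadrant, via the identity $\max(s,s')\max(t,t')+\min(s,s')\min(t,t')-st-s't'=(s-s')(t'-t)$ for $s\ge s'$, $t'\ge t$, also correctly recovers the citation route the paper relies on. What your elementary argument buys is independence from the general majorization machinery; what the citation buys is brevity and access to the more general statement for arbitrary monotone lattice-superadditive $\phi$, which the paper does not actually need here.
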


The second lemma is a variant of a well-known majorization result for 
Hadamard products of vectors \cite[Lemma 4.3.51]{HoJ12}. Since the
version below is slightly different, we include a proof from first principles.

\begin{lemma}\label{l_s1}
Let $x$, $a$ and $b$ be vectors with $m$ non-negative elements.
If $a$ weakly majorizes $b$, then
$$\sum_{j=1}^k{a_{[j]}\, x_{[j]}}\geq \sum_{j=1}^k{b_{[j]}\, x_{[j]}},
\qquad 1\leq k\leq m.$$
\end{lemma}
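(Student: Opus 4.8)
The plan is to prove the inequality by Abel summation (summation by parts), which converts the pointwise comparison of the two Hadamard-product partial sums into a comparison involving only the partial sums of $a$ and $b$, where the weak majorization hypothesis applies directly.

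First I would fix $k$ and introduce the differences $c_j\equiv a_{[j]}-b_{[j]}$ together with their partial sums $C_j\equiv\sum_{i=1}^j c_i=\sum_{i=1}^j a_{[i]}-\sum_{i=1}^j b_{[i]}$ for $1\leq j\leq k$, with the convention $C_0\equiv 0$. The weak majorization of $b$ by $a$ is precisely the statement that $C_j\geq 0$ for every $j$. I would then rewrite the quantity to be bounded, namely $\sum_{j=1}^k a_{[j]}\,x_{[j]}-\sum_{j=1}^k b_{[j]}\,x_{[j]}=\sum_{j=1}^k c_j\,x_{[j]}$, by substituting $c_j=C_j-C_{j-1}$ and shifting the summation index. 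This yields
$$\sum_{j=1}^k c_j\,x_{[j]} = C_k\,x_{[k]} + \sum_{j=1}^{k-1} C_j\,(x_{[j]}-x_{[j+1]}).$$
The $C_0$ boundary term drops out by the convention $C_0=0$, while the endpoint term $C_k\,x_{[k]}$ is retained.

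The conclusion then follows by an inspection of signs. Because $x$ is labelled in decreasing order, each consecutive difference satisfies $x_{[j]}-x_{[j+1]}\geq 0$; the weak majorization hypothesis gives $C_j\geq 0$; and the non-negativity of the elements gives $x_{[k]}\geq 0$. Hence every term on the right-hand side is non-negative, so $\sum_{j=1}^k c_j\,x_{[j]}\geq 0$, which is exactly the claimed inequality for each $k$.

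I do not anticipate a serious obstacle: the argument is elementary once the right decomposition is chosen. The only step requiring care is the index bookkeeping in the summation-by-parts identity, in particular verifying that the $C_0$ term vanishes and that the endpoint contribution $C_k\,x_{[k]}$ is correctly isolated with the right sign, so that all summands are manifestly non-negative.
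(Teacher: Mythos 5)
Your proof is correct. The Abel summation identity
$$\sum_{j=1}^k \bigl(a_{[j]}-b_{[j]}\bigr)x_{[j]} \;=\; C_k\,x_{[k]}+\sum_{j=1}^{k-1}C_j\,\bigl(x_{[j]}-x_{[j+1]}\bigr), \qquad C_j\equiv\sum_{i=1}^j\bigl(a_{[i]}-b_{[i]}\bigr),$$
is right, and the three sign conditions you invoke ($C_j\geq 0$ from weak majorization, $x_{[j]}-x_{[j+1]}\geq 0$ from the ordering, $x_{[k]}\geq 0$ from non-negativity) are exactly what is needed. The paper instead argues by peeling off the last summand: it writes $\sum_{j=1}^{k}a_{[j]}x_{[j]}=\sum_{j=1}^{k}b_{[j]}x_{[j]}+\sum_{j=1}^{k}(a_{[j]}-b_{[j]})x_{[j]}$ and lower-bounds the second sum by $C_k\,x_{[k+1]}$, citing $x_{[j]}\geq x_{[k+1]}\geq 0$. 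Both proofs rest on the same two ingredients, but your version is the tighter piece of bookkeeping: the paper's replacement of $x_{[j]}$ by $x_{[k+1]}$ is not a term-by-term estimate, since individual differences $a_{[j]}-b_{[j]}$ may be negative (e.g.\ $a=(3,0)$, $b=(2,1)$), and justifying that aggregate inequality rigorously amounts to precisely the summation-by-parts identity you wrote down. So your argument is not only valid but makes explicit a step the paper's proof leaves implicit.
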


\begin{proof}
The following arguments hold for $1\leq k\leq m-1$. 
Start out with the upper bound, and separate the last summand,
\begin{eqnarray}\label{e_m3}
\sum_{j=1}^{k+1}{a_{[j]}\,x_{[j]}} &=&
\sum_{j=1}^{k}{a_{[j]}\,x_{[j]}}  + a_{[k+1]}\,x_{[k+1]}.
\end{eqnarray}
Re-writing the right sum and applying 
$x_{[j]}\geq x_{[k+1]}\geq 0$, $1\leq j\leq k$, gives
\begin{eqnarray*}
\sum_{j=1}^{k}{a_{[j]}\,x_{[j]}}  &=& \sum_{j=1}^{k}{b_{[j]}\,x_{[j]}}  +
\sum_{j=1}^{k}{(a_{[j]}-b_{[j]})\,x_{[j]}}\\
&\geq & \sum_{j=1}^{k}{b_{[j]}\,x_{[j]}}  +
\sum_{j=1}^{k}{(a_{[j]}-b_{[j]})}\, x_{[k+1]} \\
&= & \sum_{j=1}^{k}{b_{[j]}\,x_{[j]}}  +
\left(\sum_{j=1}^{k}{a_{[j]}}-\sum_{j=1}^{k}{b_{[j]}}\right)\, x_{[k+1]}.\\
\end{eqnarray*}
Insert this into (\ref{e_m3}) and gather common terms,
\begin{eqnarray*}
\sum_{j=1}^{k+1}{a_{[j]}\,x_{[j]}} &\geq &
\sum_{j=1}^{k}{b_{[j]}\,x_{[j]}}  +
\left(\sum_{j=1}^{k+1}{a_{[j]}}  
-\sum_{j=1}^{k}{b_{[j]}}\right)\, x_{[k+1]}\\
&\geq &\sum_{j=1}^{k}{b_{[j]}\,x_{[j]}}  + b_{[k+1]}\,x_{[k+1]}
=\sum_{j=1}^{k+1}{b_{[j]}\,x_{[j]}},
\end{eqnarray*}
where the second inequality follows from the majorization
$\sum_{j=1}^{k+1}{a_{[j]}}\geq \sum_{j=1}^{k+1} {b_{[j]}}$.
\end{proof}

Now we are ready to bound the two norm of a 
row scaled matrix $DZ$, where  $Z$ is $m\times n$ of full column rank, and
$D=\diag\begin{pmatrix}d_1 &\ldots & d_m\end{pmatrix}$
is a non-negative  $m\times m$  diagonal matrix. The obvious bound is
\begin{eqnarray}\label{e_sub}
\|DZ\|_2\leq \|D\|_2\>\|Z\|_2=d_{[1]}\>\|Z\|_2.
\end{eqnarray}
However, the bound in Theorem~\ref{t_2} below, which 
incorporates the largest row norm of $Z$ and several of the largest 
(in magnitude) diagonal elements  of~$D$, turns out to be tighter.

\begin{theorem}\label{t_2}
Let $Z$ be a real $m\times n$ matrix with $\rank(Z)=n$,
smallest singular value $\sigma_z=1/\|Z^{\dagger}\|_2$, and largest
squared row norm $\mu_z\equiv\max_{1\leq j\leq m}{\|e_j^TZ\|_2^2}$. If
$t\equiv\left\lfloor\sigma_z^2/\mu_z\right\rfloor$, then
$$\|DZ\|_2^2\leq 
\begin{cases}
\mu_z\sum_{j=1}^t{d_{[j]}^2}+\left(\|Z\|_2^2-t\,\mu_z\right)\,d_{[t+1]}^2 & 
\text{if}~ \|Z\|_2^2-t\,\mu_z\leq \mu_z \\
\mu_z\sum_{j=2}^{t+1}{d_{[j]}^2}+\left(\|Z\|_2^2-t\,\mu_z\right)\,d_{[1]}^2 &
\text{otherwise}.\end{cases}$$
\end{theorem}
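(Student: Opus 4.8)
The plan is to reduce the matrix two-norm to a scalar optimization over the unit sphere, and then bound the resulting sum by a majorization argument assembled from Lemmas~\ref{l_s2} and~\ref{l_s1}.

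First I would write the squared norm as a Rayleigh-quotient maximum. Since $\|DZ\|_2^2=\lambda_{\max}(Z^TD^2Z)=\max_{\|v\|_2=1}\|DZv\|_2^2$, I fix a unit vector $v$ and set $x_j\equiv(e_j^TZv)^2$, so that $\|DZv\|_2^2=\sum_{j=1}^m d_j^2\,x_j$. The only two facts I will use about the $x_j$ are: (i) $0\le x_j\le\mu_z$, because $x_j=(e_j^TZv)^2\le\|e_j^TZ\|_2^2\|v\|_2^2\le\mu_z$; and (ii) $\sum_{j=1}^m x_j=\|Zv\|_2^2\le\|Z\|_2^2$. It then suffices to bound $\sum_j d_j^2 x_j$ uniformly over all $x$ satisfying (i) and (ii), and to take the maximum over $v$ at the end.

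Next, by the rearrangement inequality (Lemma~\ref{l_s2} with $b_j=d_j^2$), pairing the largest weights with the largest $x_j$ only increases the sum, so $\sum_j d_j^2 x_j\le\sum_j d_{[j]}^2\,x_{[j]}$, where $d_{[1]}^2\ge\cdots\ge d_{[m]}^2$ and $x_{[1]}\ge\cdots\ge x_{[m]}$ are sorted independently. The heart of the proof is to exhibit a single non-increasing, non-negative vector $a$ that weakly majorizes the sorted vector $x_{[\cdot]}$ and whose weighted sum $\sum_j a_{[j]}\,d_{[j]}^2$ is exactly the claimed right-hand side. Granted such an $a$, Lemma~\ref{l_s1}, applied with the weight vector taken to be $d^2$ and the majorized vector taken to be $x_{[\cdot]}$, yields $\sum_j d_{[j]}^2 x_{[j]}\le\sum_j a_{[j]}\,d_{[j]}^2$, which closes the chain.

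The construction of $a$, together with the verification that it weakly majorizes $x_{[\cdot]}$, is where the two cases and the quantity $t=\lfloor\sigma_z^2/\mu_z\rfloor$ enter; note first that $t\mu_z\le\sigma_z^2\le\|Z\|_2^2$, so the ``slack'' $\|Z\|_2^2-t\mu_z$ is non-negative. In the first case $\|Z\|_2^2-t\mu_z\le\mu_z$ I take $a$ to be $t$ copies of $\mu_z$, followed by the single slack value $\|Z\|_2^2-t\mu_z\in[0,\mu_z]$, followed by zeros; this is non-increasing precisely because of the case hypothesis, and $\sum_j a_{[j]}d_{[j]}^2$ reproduces the first branch. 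In the remaining case $\|Z\|_2^2-t\mu_z>\mu_z$ the slack exceeds $\mu_z$, so I promote it to the front: $a_{[1]}=\|Z\|_2^2-t\mu_z$, then $t$ copies of $\mu_z$, then zeros, which again is non-increasing and reproduces the second branch. In either case the weak-majorization inequalities $\sum_{j=1}^k a_{[j]}\ge\sum_{j=1}^k x_{[j]}$ follow from (i) for the small indices $k$ (each $x_{[j]}\le\mu_z$) and from (ii) for $k\ge t+1$ (the partial sums of $a$ have already reached $\|Z\|_2^2\ge\sum_j x_j$), while the case hypothesis is exactly what validates the intermediate indices. The main obstacle, and the only genuinely creative step, is guessing the correct shape of $a$ and recognizing that the dichotomy in the theorem is simply the question of whether the leftover mass $\|Z\|_2^2-t\mu_z$ may legally sit in position $t+1$ or must be moved to position $1$ to keep $a$ non-increasing; once $a$ is written down, the partial-sum checks and the two lemma applications are routine.
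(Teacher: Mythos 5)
Your proposal is correct and follows essentially the same route as the paper's proof: reduce to $\sum_j d_j^2 x_j$ via a maximizing unit vector, sort with Lemma~\ref{l_s2}, construct the step-shaped comparison vector $a$ ($t$ copies of $\mu_z$ plus one slack entry, positioned according to the case split), verify weak majorization from $x_{[j]}\leq\mu_z$ and the total-mass bound, and finish with Lemma~\ref{l_s1}. The only (immaterial) difference is that the paper keeps $\|Zz\|_2^2-t\mu_z$ in $a$ and relaxes to $\|Z\|_2^2$ only in the final substitution, whereas you build $\|Z\|_2^2-t\mu_z$ into $a$ from the outset.
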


\begin{proof}
Let $z$ be a $n\times 1$ vector with $\|z\|_2=1$ and
$\|DZ\|_2=\|DZz\|_2$. Furthermore let $z_j\equiv e_j^TZz$, $1\leq j\leq m$, be
the elements of $Zz$, so that $\|Zz\|_2^2=\sum_{j=1}^m{z_j^2}$.
\smallskip

\paragraph{Apply Lemma~\ref{l_s2}}
Since $d_j^2\geq 0$ and $z_{j}^2\geq 0$, $1\leq j\leq m$, we can
apply Lemma~\ref{l_s2} with $x_j=d_{j}^2$ and $b_j=z_{j}^2$, to obtain
\begin{eqnarray*}
\|DZ\|_2^2=\|DZz\|_2^2= \sum_{j=1}^m{d_{j}^2\,z_{j}^2}=
\sum_{j=1}^m{b_j\, x_j}\leq \sum_{j=1}^m{b_{[j]}\,x_{[j]}}.
\end{eqnarray*}

\paragraph{Verify assumptions of Lemma~\ref{l_s1}}
In order to apply Lemma~\ref{l_s1} with 
$$a_j=\mu_z,  \quad 1\leq j\leq t,\qquad  a_{t+1}=\|Zz\|_2^2-t\,\mu_z,
\qquad a_j=0, \quad t+2\leq j\leq m,$$
we need show that the assumptions are satisfied,
meaning all  vector elements are non-negative and the
majorization condition holds. Clearly
$a_j\geq 0$ for  $1\leq j\leq t$ and $t+2\leq j\leq m$.
This leaves $a_{t+1}$. From $\rank(Z)=n$ follows that $\sigma_z>0$.
The definition of $t$ implies $0\leq t\leq \sigma_z^2/\mu_z$, so that
$$0\leq \sigma_z^2-t\,\mu_z=\min_{\|y\|_2=1}{\|Zy\|_2^2}-t\,\mu_z\leq 
\|Zz\|_2^2-t\,\mu_z=a_{t+1}.$$
Thus, all vector elements are non-negative.

To show the majorization condition, start with the 
Cauchy-Schwartz inequality,
$$b_j=z_{j}^2=(e_{j}^TZ\>z)^2\leq  
\|e_{j}^TZ\|_2^2\>\|z\|_2^2=\|e_{j}^TZ\|_2^2\leq \mu_z, \qquad
1\leq j\leq m.$$
This yields, regardless of whether
$a_{t+1}=\|Zz\|_2^2-t\,\mu_z\leq \mu_z$ or not, 
$$\sum_{j=1}^k{a_{[j]}}\geq\sum_{j=1}^k{\mu_z}\geq \sum_{j=1}^k{z_{[j]}^2}=
\sum_{j=1}^k{b_{[j]}},\qquad 1\leq k\leq t.$$

Moreover, for $1\leq k\leq m-t$,
$$\sum_{j=1}^{t+k}{a_{[j]}}=\sum_{j=1}^t{\mu_z}+(\|Zz\|_2^2-t\,\mu_z)
=\|Zz\|_2^2\geq \sum_{j=1}^{t+k}{z_{[j]}^2}=\sum_{j=1}^{t+k}{b_{[j]}}.$$
This gives the weak majorization condition
$\sum_{j=1}^k{a_{[j]}}\geq \sum_{j=1}^k{b_{[j]}}$, $1\leq k\leq m$.
\smallskip

\paragraph{Apply Lemma \ref{l_s1}}
Since the assumptions of Lemma~\ref{l_s1} are satisfied, we can conclude
that
$\sum_{j=1}^{m}{b_{[j]}\,x_{[j]}}\leq \sum_{j=1}^{m}{a_{[j]}\,x_{[j]}}$.
At last, substitute into this majorization 
relation the expressions for $a$ and $b$.
If $\|Z\|_2^2-t\,\mu_z \leq \mu_z$, then
$$\sum_{j=1}^m{a_{[j]}\,x_{[j]}}=\mu_z\sum_{j=1}^t{d_{[j]}^2}+
(\|Zz\|_2^2-t\,\mu_z)\,d_{[t+1]}^2\leq 
\mu_z\sum_{j=1}^t{d_{[j]}^2}+(\|Z\|_2^2-t\,\mu_z)\,d_{[t+1]}^2,$$
otherwise
$$\sum_{j=1}^m{a_{[j]}\,x_{[j]}}=(\|Zz\|_2^2-t\,\mu_z)\,d_{[1]}^2+
\mu_z\sum_{j=2}^{t+1}{d_{[j]}^2}\leq 
(\|Z\|_2^2-t\,\mu_z)\,d_{[1]}^2+
\mu_z\sum_{j=2}^{t+1}{d_{[j]}^2}.$$
\end{proof}

Theorem~\ref{t_2} is tighter than (\ref{e_sub}) because
$d_{[j]}^2\leq \|D\|_2^2$ implies
\begin{eqnarray*}
\|DZ\|_2^2 & \leq &
\begin{cases}
\mu_z\sum_{j=1}^t{d_{[j]}^2}+\left(\|Z\|_2^2-t\,\mu_z\right)\,d_{[t+1]}^2 & 
\text{if}~ \|Z\|_2^2-t\,\mu_z\leq \mu_z \\
\mu_z\sum_{j=2}^{t+1}{d_{[j]}^2}+\left(\|Z\|_2^2-t\,\mu_z\right)\,d_{[1]}^2 &
\text{otherwise}\end{cases}\\
& \leq & t\mu_z \|D\|_2^2 -\left(\|Z\|_2^2-t\,\mu_z\right)\,\|D\|_2^2 =
\|D\|_2^2\|Z\|_2^2.
\end{eqnarray*}

\begin{corollary}\label{c_t2}
Let $Z$ be a real $m\times n$ matrix with $Z^TZ=I_n$, and coherence
$\mu_z\equiv\max_{1\leq j\leq m}{\|e_j^TZ\|_2^2}$. If
$t\equiv\left\lfloor 1/\mu_z\right\rfloor$, then
$$\|DZ\|_2^2\leq \mu_z\sum_{j=1}^t{d_{[j]}^2}+
\left(1-t\,\mu_z\right)\,d_{[t+1]}^2.$$
\end{corollary}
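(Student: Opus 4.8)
The plan is to obtain this as a direct specialization of Theorem~\ref{t_2}. The key observation is that the hypothesis $Z^TZ=I_n$ forces all singular values of $Z$ to equal $1$, so that $\|Z\|_2=1$ and $\sigma_z=1/\|Z^{\dagger}\|_2=1$. In particular $\|Z\|_2^2=\sigma_z^2=1$, and therefore the index defined in Theorem~\ref{t_2}, namely $t=\lfloor\sigma_z^2/\mu_z\rfloor$, reduces to $t=\lfloor 1/\mu_z\rfloor$, which matches the definition of $t$ in the statement of the corollary.

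It then remains to determine which of the two branches of Theorem~\ref{t_2} applies, and I would check the branching condition $\|Z\|_2^2-t\,\mu_z\leq\mu_z$. Substituting $\|Z\|_2^2=1$, this reads $1-t\,\mu_z\leq\mu_z$. By definition of the floor, $t\leq 1/\mu_z<t+1$, so on the one hand $t\,\mu_z\leq 1$, giving $1-t\,\mu_z\geq 0$, and on the other hand $1<(t+1)\,\mu_z$, giving $1-t\,\mu_z<\mu_z$. Hence the branching condition holds (strictly, in fact), and the first case of Theorem~\ref{t_2} always governs.

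Finally I would substitute $\|Z\|_2^2=1$ into the first-case bound
$$\|DZ\|_2^2\leq \mu_z\sum_{j=1}^t{d_{[j]}^2}+\bigl(\|Z\|_2^2-t\,\mu_z\bigr)\,d_{[t+1]}^2,$$
which immediately yields the claimed inequality. There is no genuine obstacle here: all the content is carried by Theorem~\ref{t_2}, and orthonormality of the columns merely pins down $\|Z\|_2^2=\sigma_z^2=1$. The only point requiring a moment's care is confirming that the first branch, rather than the second, is in force, and this follows purely from the elementary floor inequalities $t\,\mu_z\leq 1<(t+1)\,\mu_z$ used above.
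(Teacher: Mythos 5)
Your proposal is correct and matches the paper's own proof: both specialize Theorem~\ref{t_2} using $\|Z\|_2=\sigma_z=1$ and verify that the first branch applies via an elementary floor inequality (the paper uses $\lfloor 1/\mu_z\rfloor\geq 1/\mu_z-1$, you use $1/\mu_z<t+1$, which is the same estimate). Nothing further is needed.
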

\begin{proof}
Applying Theorem \ref{t_2} and assuming $1-t\,\mu_z\leq \mu_z$ gives
$$\|DZ\|_2^2\leq \mu_z\sum_{j=1}^t{d_{[j]}^2}+
\left(1-t\,\mu_z\right)\,d_{[t+1]}^2.$$
The assumption $1-t\,\mu_z\leq \mu_z$ is justified because
$$1-t\,\mu_z = 1-\left\lfloor 1/\mu_z\right\rfloor\,\mu_z \leq 1-(1/\mu_z - 1)\mu_z = \mu_z.$$
\end{proof}

\subsection{Proof of Corollary~\ref{c_1}}\label{s_lqproof}
Apply Corollary~\ref{c_t2} with $D=L^{1/2}$, $Z=Q$, $\mu_z=\mu$,
and $t=\lfloor 1/\mu\rfloor$ to prove the first inequality,
$$\|Q^TLQ\|_2=\|L^{1/2}Q\|_2^2\leq \mu \>
\sum_{j=1}^t{\ell_{[j]}}+(1-t\,\mu)\,\ell_{[t+1]}.$$ 
As for the second inequality, $\ell_{[j]}\leq \mu$ implies
$$\mu\>\sum_{j=1}^t{\ell_{[j]}}+(1-t\,\mu)\,\ell_{[t+1]}\
\leq t\mu^2+ (1-t\mu)\mu=\mu.$$
If, in addition, $t$ is an integer, then $t=1/\mu$ and $1-t\,\mu=0$.

\subsection{Proof of Corollary~\ref{c_comp}}\label{s_compproof}
Define the common term 
$\phi\equiv m\>\ln(2n/\delta)/\epsilon^2$
in both bounds, and write Corollary~\ref{c_cohbound} as
$c\geq  3\mu\>\phi$,
and Corollary~\ref{c_levbound} as
$c\geq  (2\tau+\tfrac{2}{3}\epsilon\,\mu)\>\phi$.
From $\epsilon<1$ and $\tau\leq \mu$ follows
$$2\tau+\tfrac{2}{3}\epsilon\,\mu\leq 3\mu.$$

\section{Existence of matrices with prescribed coherence and 
leverage scores}\label{s_pre}
This section is the basis for Algorithm~\ref{alg_gen}.
We review a well-known majorization result (Theorem~\ref{t_horn}). 
We use it to show (Theorem~\ref{t_maj}) that, given prescribed matrix 
dimensions and leverage scores, there always exists a
matrix~$Q$ with orthonormal columns that has the required dimensions
and (squared) row norms equal to the leverage scores.

Our approach is again based on majorization, see Definition~\ref{d_maj},
and in particular on the fact that the eigenvalues of
a real symmetric matrix majorize its diagonal elements.

\begin{theorem}[Theorem 4.3.48 in \cite{HoJ12}]\label{t_horn}
Let $a$ and $\lambda$ be  vectors with real elements 
$a_j$ and $\lambda_j$, respectively, $1\leq j\leq m$.
If $\lambda$ majorizes $a$, then there exists 
a $m\times m$ real symmetric matrix with 
eigenvalues $\lambda_j$ and diagonal elements $a_j$, $1\leq j\leq m$.
\end{theorem}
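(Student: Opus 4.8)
The plan is to prove the existence (Horn) direction of the Schur--Horn theorem by induction on $m$, peeling off one diagonal entry at a time while preserving the spectrum through orthogonal similarity. Since a permutation similarity permutes the diagonal entries and leaves the eigenvalues unchanged, I may assume throughout that both vectors are sorted in decreasing order, $\lambda_1\geq\cdots\geq\lambda_m$ and $a_1\geq\cdots\geq a_m$, and construct a matrix whose diagonal is $a$ in this order; a final permutation restores the prescribed ordering. The base case $m=1$ is immediate. The key preliminary observation, read off from Definition~\ref{d_maj}, is the bracketing $\lambda_m\leq a_1\leq\lambda_1$: the inequality $a_1\leq\lambda_1$ is the $k=1$ majorization inequality, while $a_1\geq\frac1m\sum_j a_j=\frac1m\sum_j\lambda_j\geq\lambda_m$ since $a_1$ is the largest entry and the two vectors have equal sum.

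The heart of the argument is a one-step reduction carried out by a single plane rotation. Because $a_1\in[\lambda_m,\lambda_1]$ and the $\lambda_j$ are sorted, there is an index $p$ with $\lambda_p\geq a_1\geq\lambda_{p+1}$. Working in $\Lambda=\mathrm{diag}(\lambda)$, I rotate the coordinates $p$ and $p+1$ by an angle $\theta$ chosen so that the rotated $(p,p)$ entry $\lambda_p\cos^2\theta+\lambda_{p+1}\sin^2\theta$ equals $a_1$; this is solvable precisely because $a_1$ lies between $\lambda_p$ and $\lambda_{p+1}$. The rotation is an orthogonal similarity, so the spectrum is unchanged; it couples only coordinates $p,p+1$, and the partner $(p+1,p+1)$ entry becomes $\nu\equiv\lambda_p+\lambda_{p+1}-a_1$, which again lies in $[\lambda_{p+1},\lambda_p]$. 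Moving the coordinate holding $a_1$ into position $1$ by a permutation, I obtain an orthogonal $U$ with $(U\Lambda U^{T})_{11}=a_1$ whose trailing $(m-1)\times(m-1)$ principal block has eigenvalues $\lambda'=(\lambda_1,\ldots,\lambda_{p-1},\nu,\lambda_{p+2},\ldots,\lambda_m)$.

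The crucial verification --- which I expect to be the main obstacle --- is that this deflated spectrum still majorizes the remaining targets, $\lambda'\succeq(a_2,\ldots,a_m)$. Equality of the total sums is automatic, since $\sum\lambda'=\sum\lambda-a_1=\sum_{j\geq2}a_j$. For the partial sums one checks two ranges against Definition~\ref{d_maj}: for indices below the slot of $\nu$ one combines $\sum_{j=1}^k\lambda_j\geq\sum_{j=1}^k a_j$ with $a_1\geq a_{k+1}$; for any cumulative sum that includes $\nu$, the term $\nu$ restores $\lambda_p+\lambda_{p+1}$, so that sum equals $\sum_{j=1}^{p+s}\lambda_j-a_1\geq\sum_{j=1}^{p+s}a_j-a_1=\sum_{j=2}^{p+s}a_j$. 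The delicate point is exactly the adjacency choice of $q=p+1$: bracketing $a_1$ by two \emph{neighbouring} eigenvalues is what keeps the inserted value $\nu$ in its correct sorted slot and makes every partial-sum inequality survive the deflation.

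With this reduction lemma in hand, the induction closes cleanly. First conjugate the trailing block by $\mathrm{diag}(1,W)$, with $W$ orthogonal, to replace it by $\mathrm{diag}(\lambda')$; this acts only on the trailing coordinates and hence does not disturb the $(1,1)$ entry $a_1$. The inductive hypothesis, applied to the pair $(\lambda',(a_2,\ldots,a_m))$ and valid by the majorization just verified, yields an orthogonal $(m-1)\times(m-1)$ matrix $V$ with $V\,\mathrm{diag}(\lambda')\,V^{T}$ having diagonal $(a_2,\ldots,a_m)$. Conjugating the whole matrix by $\mathrm{diag}(1,V)$ fixes the first diagonal entry at $a_1$, sets the remaining diagonal to $(a_2,\ldots,a_m)$, and preserves the eigenvalues $\lambda$ throughout; a final permutation similarity restores the diagonal to the originally prescribed order of $a$. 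I would remark that this construction is precisely the abstract, symmetric-matrix analogue of the Givens-rotation sweep used in Algorithm~\ref{alg_gen}, and that an equivalent route reformulates the claim as the assertion that $\lambda\succeq a$ implies $a=\Omega\lambda$ for an orthostochastic matrix $\Omega$, whose proof reduces to the same rotation step.
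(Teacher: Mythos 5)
The paper does not actually prove this statement: Theorem~\ref{t_horn} is imported verbatim from Horn and Johnson \cite{HoJ12} (Theorem 4.3.48), the citation in the theorem header being its only justification, and it is then used as a black box in the proof of Theorem~\ref{t_maj}. Your proposal therefore supplies a proof the paper deliberately omits, and it is correct; it is essentially the classical Schur--Horn existence argument, which is also the one given in the cited reference. The reduction step is sound: a rotation in the $(p,p+1)$ plane of $\mathrm{diag}(\lambda)$ with $\lambda_p\geq a_1\geq\lambda_{p+1}$ produces diagonal entries $a_1$ and $\nu=\lambda_p+\lambda_{p+1}-a_1$, and creates off-diagonal fill only between those two coordinates, so after the permutation the fill sits in the first row and column and the trailing $(m-1)\times(m-1)$ principal block is exactly $\mathrm{diag}(\lambda')$ (which makes your extra conjugation by $\mathrm{diag}(1,W)$ vacuous --- harmless, since that block is already diagonal). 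The deflated majorization $\lambda'\succeq(a_2,\ldots,a_m)$, which you rightly single out as the crux, checks out in both ranges: for $k<p$ combine $\sum_{j\leq k}\lambda_j\geq\sum_{j\leq k}a_j$ with $a_1\geq a_{k+1}$, and for $k\geq p$ the partial sum collapses to $\sum_{j\leq k+1}\lambda_j-a_1\geq\sum_{j\leq k+1}a_j-a_1$; the adjacency of the bracketing indices is indeed what keeps $\nu$ in its sorted slot so that these are the correct ordered partial sums. Conjugating by $\mathrm{diag}(1,V)$ then closes the induction without disturbing the $(1,1)$ entry, and the final permutation restores the prescribed ordering. Your closing remark is also apt: the paper's Algorithm~\ref{alg_gen} is precisely a computational realization of this rotation sweep, specialized to the spectrum $\lambda=(1,\ldots,1,0,\ldots,0)$ needed in Theorem~\ref{t_maj}.
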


With the help of Theorem~\ref{t_horn} we show that there exists
a matrix with orthonormal columns that has prescribed leverage scores and
coherence.

\begin{theorem}\label{t_maj}
Given integers $m$ and $n$ with $m\geq n\geq 1$; 
and a vector $\ell$ with $m$ elements $\ell_j$ that satisfy
$0\leq \ell_j \leq 1$ and $\sum_{j=1}^m{\ell_j}=n$. Then
there exists a $m\times n$ matrix $Q$ with orthonormal 
columns that has leverage scores $\|e_j^TQ\|_2^2=\ell_j$, $1\leq j\leq m$,
and coherence $\mu=\max_{1\leq j\leq m}{\ell_j}$.
\end{theorem}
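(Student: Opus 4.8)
The plan is to construct the hat matrix $H=QQ^T$ rather than $Q$ directly, and then recover $Q$ from a spectral decomposition of $H$. The key observation is that if $Q$ is $m\times n$ with $Q^TQ=I_n$, then $H\equiv QQ^T$ is the orthogonal projector onto the column space of $Q$: its eigenvalues are $n$ ones and $m-n$ zeros, while its diagonal entries are precisely the leverage scores, $H_{jj}=(QQ^T)_{jj}=\|e_j^TQ\|_2^2$. So it suffices to produce a real symmetric matrix whose spectrum is $\lambda=(1,\dots,1,0,\dots,0)$, with $n$ ones, and whose diagonal equals $\ell$. For that I would invoke Horn's theorem (Theorem~\ref{t_horn}), which constructs a symmetric matrix with prescribed eigenvalues and diagonal whenever the eigenvalue vector majorizes the diagonal vector.

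To apply Theorem~\ref{t_horn}, I first verify that $\lambda$ majorizes $\ell$ in the sense of Definition~\ref{d_maj}. Labelling both vectors in decreasing order, $\lambda_{[j]}=1$ for $1\leq j\leq n$ and $\lambda_{[j]}=0$ for $j>n$. For $k\leq n$ the partial sum $\sum_{j=1}^k\lambda_{[j]}=k$ dominates $\sum_{j=1}^k\ell_{[j]}\leq k$, using $\ell_{[j]}\leq 1$; for $k\geq n$ the partial sum $\sum_{j=1}^k\lambda_{[j]}=n$ dominates $\sum_{j=1}^k\ell_{[j]}\leq\sum_{j=1}^m\ell_j=n$, using $\ell_j\geq 0$. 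Finally the total sums agree, $\sum_{j=1}^m\lambda_{[j]}=n=\sum_{j=1}^m\ell_j$, so the majorization is exact. Theorem~\ref{t_horn} then yields a real symmetric $m\times m$ matrix $H$ with eigenvalues $\lambda_j$ and diagonal entries $\ell_j$.

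It remains to extract $Q$ from $H$. Since $H$ is symmetric with every eigenvalue in $\{0,1\}$, it is idempotent, $H^2=H$, hence an orthogonal projector of rank~$n$. Collecting $n$ orthonormal eigenvectors belonging to the eigenvalue~$1$ as the columns of an $m\times n$ matrix $Q$, I obtain $Q^TQ=I_n$ and $H=QQ^T$. Reading off the diagonal gives $\|e_j^TQ\|_2^2=(QQ^T)_{jj}=H_{jj}=\ell_j$ for each $j$, and the coherence is $\mu=\max_{1\leq j\leq m}\ell_j$ by Definition~\ref{d_co}.

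The construction is essentially forced once one passes to the projector picture, so there is no deep obstacle; the only point requiring genuine care is the majorization verification, which splits naturally at $k=n$ and uses the two hypotheses $\ell_j\leq 1$ and $\sum_{j}\ell_j=n$ in the two respective ranges. The passage from a symmetric matrix with $\{0,1\}$ spectrum to the factorization $QQ^T$ is standard but worth stating explicitly, since idempotence is exactly what guarantees that the selected eigenvectors furnish orthonormal columns with the prescribed row norms.
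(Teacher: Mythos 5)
Your proposal is correct and follows essentially the same route as the paper's own proof: verify that the $0$--$1$ eigenvalue vector majorizes $\ell$ (splitting at $k=n$), invoke Theorem~\ref{t_horn} to obtain a symmetric matrix with that spectrum and diagonal, and read off $Q$ from its eigenvalue decomposition as $W=QQ^T$. The only cosmetic difference is that you phrase the final extraction via idempotence of the projector, where the paper writes the eigendecomposition directly.
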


\begin{proof}
Let $\lambda$ be a vector with $m$ elements that satisfy
$\lambda_j=1$ for $1\leq j\leq n$, and $\lambda_j=0$ for
$n+1\leq j\leq m$.
We are going to construct a matrix $Q$ by applying Theorem~\ref{t_horn} 
to $\lambda$ and $\ell$.
To this end, we first need to show that $\lambda$ majorizes $\ell$.

\paragraph{Majorization}
We distinguish the cases  $1\leq k\leq n$ and $n+1\leq k\leq m$.
\begin{description}
\item[Case $1\leq k\leq n$:\ ] From $\ell_j\leq 1$ follows
$$\sum_{j=1}^{k}{\lambda_j}=k\geq \sum_{j=1}^k{\ell_{[j]}}.$$
\item[Case $n+1\leq k\leq m$:\ ]
From $\ell_j\geq 0$ and $\sum_{j=1}^m{\ell_j}=n$ follows
$$\sum_{j=1}^{k}{\lambda_j}=n=\sum_{j=1}^{k}{\ell_{[j]}}  +  
\sum_{j=k+1}^{m}{\ell_{[j]}} \geq \sum_{j=1}^{k}{\ell_{[j]}}.$$
\end{description}
Hence 
$$\sum_{j=1}^{k}{\lambda_j}\geq \sum_{j=1}^k{\ell_{[j]}}, 
\qquad 1\leq k\leq m,$$
which means that $\lambda$ weakly majorizes $\ell$.
Since also $\sum_{j=1}^m{\lambda_j}=n=\sum_{j=1}^{m}{\ell_{[j]}}$,
we can conclude that $\lambda$ majorizes $\ell$.

\paragraph{Construction of $Q$}
Theorem \ref{t_horn} implies that there 
exists a real symmetric matrix $W$ with  eigenvalues
$\lambda_j$ and diagonal elements $W_{jj}=\ell_j$, $1\leq j\leq m$.
Since $W$ has $n$ eigenvalues equal to one, and all other 
eigenvalues equal to zero, it has an eigenvalue decomposition
$$W=\hat{Q}\begin{pmatrix}I_n &0\\ 0&0\end{pmatrix}\hat{Q}^T=QQ^T,$$
where $\hat{Q}$ is a $m\times m$ real orthogonal matrix, and 
$Q\equiv\hat{Q}\begin{pmatrix}I_n&0\end{pmatrix}^T$ has $n$ orthonormal
columns. Therefore $Q$ has leverage scores 
$\|e_j^TQ\|_2^2=e_j^TQQ^Te_j=W_{jj}=\ell_j$
and coherence $\mu=\max_{1\leq j\leq m}{\ell_j}$.
\end{proof}
\bibliography{biblio} 
\end{document}